%% file: 00_main_BB.tex
\documentclass[11pt]{article}

\usepackage[a4paper,margin=1.15in]{geometry}
\usepackage{amsmath,amssymb,amsthm,mathtools}
\usepackage[colorlinks=true,citecolor=blue,linkcolor=blue,urlcolor=blue]{hyperref}

\hypersetup{
  pdftitle={Logarithmic oscillatory multipliers and log-subdyadic square functions},
  pdfauthor={Vicente Vergara},
  pdfsubject={Fourier multipliers and logarithmic square functions},
  pdfkeywords={Fourier multipliers, square functions, logarithmic oscillation, log-subdyadic decompositions, weighted inequalities}
}

\theoremstyle{plain}
\newtheorem{theorem}{Theorem}[section]
\newtheorem{proposition}[theorem]{Proposition}
\newtheorem{lemma}[theorem]{Lemma}
\newtheorem{corollary}[theorem]{Corollary}

\theoremstyle{definition}
\newtheorem{definition}[theorem]{Definition}
\newtheorem{hypothesis}[theorem]{Hypothesis}

\theoremstyle{remark}
\newtheorem{remark}[theorem]{Remark}

\title{Logarithmic oscillatory multipliers and log-subdyadic square functions}
\author{Vicente Vergara\footnote{Department of Mathematics, Faculty of Physical and Mathematical Sciences, University of Concepci\'on, Concepci\'on, Chile.}}
\date{} 

\providecommand{\subjclass}[2][]{%
  \par\smallskip\noindent\textit{#1 Mathematics Subject Classification.} #2\par
}
\providecommand{\keywords}[1]{%
  \par\smallskip\noindent\textit{Keywords.} #1\par
}

\begin{document}

\maketitle

\begin{abstract}
We develop square-function estimates for Fourier multipliers whose local
oscillation scale is
\[
        \rho(R)=\frac{R}{(\log R)^{\gamma-1}},
        \qquad \gamma>1.
\]
This scale lies strictly between the dyadic scale and every fixed
power-subdyadic scale at high frequency. For high-frequency symbols satisfying
a localized Sobolev condition on balls of radius comparable to $\rho(R)$, we
prove a pointwise square-function estimate and a weighted $L^2$ multiplier
inequality. After adjoining a smooth compactly supported low-frequency part,
we derive unweighted $L^p$ bounds. The weighted estimate is
governed by a logarithmic geometric maximal operator whose $L^r$ threshold is
necessary apart from the equality case. As a model application, consider
\[
        L(\xi)=\frac12\log(e^2+|\xi|^2),
        \qquad
        m_{\gamma,\beta}(\xi)=L(\xi)^{-\beta}e^{iL(\xi)^\gamma}.
\]
For $p=2$, the associated multiplier is bounded on $L^2$ for every
$\beta\geq0$. For $1<p<\infty$, $p\neq2$, it is bounded on $L^p$ under the
sufficient condition
\[
        \beta>
        d(\gamma-1)\left|\frac12-\frac1p\right|.
\]
\end{abstract}

\subjclass[2020]{Primary 42B15, 42B20; Secondary 42B25, 42B35}

\keywords{Fourier multipliers, square functions, logarithmic oscillation,
log-subdyadic decompositions, weighted inequalities, Littlewood--Paley theory}


\input{10_intro_BB}
\input{20_prelim_BB}
\input{30_decomposition_BB}
\input{40_pointwise_BB}
\input{50_weighted_BB}
\input{60_examples_BB}
\input{70_lp_bounds_BB}

\bibliographystyle{plain}
\bibliography{BB_log}

\end{document}

%% file: 10_intro_BB.tex
\section{Introduction}
\label{sec:introduction}

The study of oscillatory and strongly singular Fourier multipliers includes
foundational contributions of Hirschman and Wainger, followed by the work of
H\"ormander, Fefferman, Fefferman--Stein, Sj\"olin, Miyachi, and others
\cite{Hirschman1959MultiplierTransformations,
Wainger1965SpecialTrigonometricSeries,
Hormander1960TranslationInvariant,
Fefferman1970StronglySingular,
FeffermanStein1972HpSpaces,
Sjolin1981OscillatingKernels,
Miyachi1980FourierMultipliersHp,
Miyachi1981Singular}. A useful way to organize the different oscillatory
regimes is through the variation of a radial phase $\Phi$ across a dyadic
annulus. At frequency $R$, this variation is measured, at the level of the
first derivative, by
\[
        \Omega_\Phi(R):=R|\Phi'(R)|,
\]
while the frequency scale on which the phase changes by an amount comparable
to one is
\[
        \rho_\Phi(R)
        :=
        |\Phi'(R)|^{-1}
        =
        \frac{R}{\Omega_\Phi(R)}.
\]
When $\Omega_\Phi(R)$ remains bounded, the dyadic resolution is compatible
with the first-order variation of the phase. Once
$\Omega_\Phi(R)\to\infty$, the oscillation is no longer resolved at the
dyadic scale and a finer frequency decomposition becomes natural.

For the power phase $\Phi(R)=R^\alpha$, with $\alpha>0$, one has
\[
        \Omega_\Phi(R)\sim R^\alpha,
        \qquad
        \rho_\Phi(R)\sim R^{1-\alpha}.
\]
Beltran and Bennett developed a subdyadic square-function theory adapted to
this fixed power geometry
\cite{BeltranBennett2017Subdyadic}. Their framework combines localized
Sobolev conditions on balls of radius comparable to $R^{1-\alpha}$ with
pointwise square-function estimates and weighted inequalities governed by
geometrically defined maximal operators. This weighted perspective is
related to earlier geometric control of singular Fourier multipliers
\cite{Bennett2014OptimalControl} and to maximal operators associated with
tangential approach regions \cite{NagelStein1984MaximalApproachRegions}.

The regime considered here lies strictly between the dyadic and the fixed
power-subdyadic settings. Fix
\[
        \Phi(R)=(\log R)^\gamma,
        \qquad
        \gamma>1,
\]
for $R$ sufficiently large. Then
\[
        \Omega_\Phi(R)
        =
        R|\Phi'(R)|
        \sim
        (\log R)^{\gamma-1},
\]
so the variation of the phase across a dyadic annulus is unbounded. On the
other hand,
\[
        (\log R)^{\gamma-1}
        =
        o(R^\alpha)
        \qquad
        \text{for every fixed }\alpha>0.
\]
Thus the phase produces genuinely subdyadic oscillation, but its complexity
grows more slowly than every positive power of the frequency. Equivalently,
its natural frequency scale
\begin{equation}
\label{eq:rho-log}
        \rho(R)
        =
        \frac{R}{(\log R)^{\gamma-1}}
\end{equation}
satisfies
\[
        R^{1-\alpha}
        \ll
        \rho(R)
        \ll
        R
        \qquad
        \text{for every fixed }\alpha>0.
\]
Consequently, this logarithmic regime is neither resolved by the classical
dyadic geometry nor represented by any fixed member of the power-subdyadic
family.

The purpose of the present paper is to develop the pointwise and weighted
square-function architecture of Beltran--Bennett in this slowly divergent
regime. The spatial scale dual to \eqref{eq:rho-log} is
\begin{equation}
\label{eq:a-gamma}
        a_\gamma(t)
        =
        \rho(t^{-1})^{-1}
        =
        t(\log(1/t))^{\gamma-1}.
\end{equation}
On the dyadic annulus $|\xi|\sim2^k$, the frequency radius is
\[
        \rho(2^k)
        \sim
        \frac{2^k}{k^{\gamma-1}},
\]
and an adapted partition contains on the order of
$k^{d(\gamma-1)}$ pieces. The partition scale, the physical aperture, and
the localized Sobolev normalization therefore vary together with the
dyadic generation.

The guiding model is
\[
        L(\xi)
        =
        \frac12\log(e^2+|\xi|^2)
\]
and
\begin{equation}
\label{eq:model-multiplier}
        m_{\gamma,\beta}(\xi)
        =
        L(\xi)^{-\beta}e^{iL(\xi)^\gamma}.
\end{equation}
The regularization $L$ is smooth on $\mathbb{R}^d$ and satisfies
$L(\xi)=\log|\xi|+O(|\xi|^{-2})$ as $|\xi|\to\infty$. At high frequency,
the derivatives of the oscillatory factor are governed by
$\rho(|\xi|)^{-1}$ rather than by $|\xi|^{-1}$. The localized logarithmic
Miyachi condition introduced below records this derivative scale on balls
of radius comparable to $\rho(R)$.

The square-function method belongs to the Littlewood--Paley framework of
Stein \cite{Stein1970SingularIntegrals,Stein1970Topics}. It is also related
to square functions for nonstandard frequency decompositions, including the
Rubio de Francia inequality for arbitrary intervals
\cite{RubioDeFrancia1985LittlewoodPaley} and square functions associated with
radial multipliers \cite{LeeRogersSeeger2014SquareFunctions}. Here the
frequency geometry is radial and scale-dependent, and the associated physical
approach region has aperture $a_\gamma(t)$.

Let $\phi$ be the fixed Littlewood--Paley function from Section
\ref{sec:preliminaries}. The logarithmic square function is
\[
        g_{\log,\gamma,\beta}(f)(x)
        =
        \left(
        \int_0^{t_0}
        \int_{|y-x|\leq a_\gamma(t)}
        |f*\phi_t(y)|^2
        \frac{\mathrm{d}y}{a_\gamma(t)^d}
        (\log(1/t))^{2\beta}
        \frac{\mathrm{d}t}{t}
        \right)^{1/2}.
\]
Its robust counterpart is
\[
        g^*_{\log,\gamma,\beta,\lambda}(f)(x)
        =
        \left(
        \int_0^{t_0}
        \int_{\mathbb{R}^d}
        |f*\phi_t(y)|^2
        K_t^\lambda(x-y)
        (\log(1/t))^{2\beta}
        \frac{\mathrm{d}t}{t}
        \right)^{1/2},
\]
where
\[
        K_t^\lambda(z)
        =
        a_\gamma(t)^{-d}
        \left(1+\frac{|z|}{a_\gamma(t)}\right)^{-d\lambda}.
\]
The factor $(\log(1/t))^{2\beta}$ compensates for logarithmic symbol decay
at frequency $|\xi|\sim t^{-1}$. The precise definitions are given in
Definition \ref{def:log-square-functions}.

The first main result is a pointwise square-function estimate. If $m$
satisfies the localized logarithmic Miyachi condition with Sobolev order
$\sigma>d/2$ and $\lambda=2\sigma/d$, then
\begin{equation}
\label{eq:intro-main-estimate}
        g_{\log,\gamma,\beta}(T_mf)(x)
        \leq
        Cg^*_{\log,\gamma,0,\lambda}(f)(x).
\end{equation}
The constant depends on $d,\gamma,\beta,\sigma,\lambda$, the localized
Miyachi constant, and the fixed cutoffs, but not on $f$, $x$, the scale, or
the particular adapted ball. The full statement is Theorem
\ref{thm:log-bb-pointwise}.

The proof separates the logarithmic geometry from the local multiplier
estimate. On each dyadic annulus we construct a lattice partition at radius
$\rho(2^k)$, prove decoupling and recoupling inequalities for that partition,
and combine them with a one-ball Sobolev estimate. The local stability of
$\rho$ is used to keep all constants uniform as the number of pieces grows
with $k$. These ingredients are contained in Propositions
\ref{prop:log-decoupling}, \ref{prop:log-recoupling}, and
\ref{prop:local-multiplier-estimate}. The argument therefore requires
simultaneous control of the partition scale, the physical aperture, and the
localized Sobolev normalization.

The second main result is a weighted $L^2$ estimate for a high-frequency
multiplier $T_{m_{\mathrm{hi}}}$. The relevant geometric maximal operator is
\[
        M_{\log,\gamma,\beta}w(x)
        =
        \sup_{\substack{0<t<t_0\\ |y-x|\leq a_\gamma(t)}}
        (\log(1/t))^{-2\beta}A_tw(y),
\]
where $A_tw=\eta_t*w$ is a smooth average at scale $t$. The inverse
logarithmic factor matches the weight in
$g_{\log,\gamma,\beta}$. If
$\operatorname{supp}m_{\mathrm{hi}}\subset\{|\xi|\geq2t_0^{-1}\}$ and
$m_{\mathrm{hi}}$ satisfies the localized hypothesis, then
\begin{equation}
\label{eq:intro-weighted-bound}
        \int_{\mathbb{R}^d}
        |T_{m_{\mathrm{hi}}}f(x)|^2w(x)\,\mathrm{d}x
        \lesssim
        \int_{\mathbb{R}^d}
        |f(x)|^2
        M^2\bigl(M_{\log,\gamma,\beta}M^4w\bigr)(x)
        \,\mathrm{d}x.
\end{equation}
This follows from the pointwise estimate, a reverse logarithmic
square-function inequality, and the forward weighted estimate for the robust
square function. The argument uses weighted Littlewood--Paley estimates in
the form developed by Wilson \cite{Wilson1989WeightedSquareFunction}. The
full statement is Theorem \ref{thm:weighted-log-multiplier}.

The third main result is the resulting unweighted $L^p$ theory. The maximal
operator satisfies
\[
        \|M_{\log,\gamma,\beta}w\|_{L^r}
        \lesssim
        \|w\|_{L^r}
\]
when $1<r<\infty$ and
\[
        2\beta>\frac{d(\gamma-1)}{r}.
\]
A test on characteristic functions of balls shows that boundedness requires
\[
        2\beta\geq\frac{d(\gamma-1)}{r}.
\]
Hence the maximal theorem is optimal away from the equality case, which is
not treated here. This is a statement about the geometric maximal operator,
not a necessity
result for the full multiplier class.

Combining the weighted estimate with the maximal theorem gives the following
consequence. Let $1<p<\infty$ and write
\[
        m=m_{\mathrm{lo}}+m_{\mathrm{hi}},
\]
where $m_{\mathrm{lo}}$ is smooth and compactly supported and
$m_{\mathrm{hi}}$ satisfies the high-frequency localized logarithmic Miyachi
condition. For $p=2$, one has $L^2$ boundedness for every $\beta\geq0$. If
$p\neq2$, the sufficient condition
\[
        \beta>
        d(\gamma-1)
        \left|\frac12-\frac1p\right|
\]
implies
\[
        \|T_mf\|_{L^p(\mathbb{R}^d)}
        \lesssim
        \|f\|_{L^p(\mathbb{R}^d)}.
\]
The localized symbol class is invariant under complex conjugation, so the
range $1<p<2$ follows by duality without a separate adjoint hypothesis. The
endpoint equality is not addressed. The model
\eqref{eq:model-multiplier} satisfies the localized hypothesis at high
frequency and therefore obeys these weighted and unweighted conclusions.

The paper is organized as follows. Section \ref{sec:preliminaries} introduces
the logarithmic geometry, the square functions, and the localized Miyachi
condition. Section \ref{sec:log-decomposition} constructs the adapted lattice
partition and proves the decoupling and recoupling estimates. Section
\ref{sec:pointwise-estimate} establishes the local multiplier estimate and
the pointwise theorem. Section \ref{sec:weighted} proves the weighted
inequalities. Section \ref{sec:model-multiplier} verifies the model symbol.
Section \ref{sec:lp-bounds} proves the maximal theorem, its necessary
condition away from the endpoint, and the $L^p$ consequences.

%% file: 20_prelim_BB.tex
\section{Preliminaries and logarithmic geometry}
\label{sec:preliminaries}

Throughout the paper $d\geq1$, $\gamma>1$, and $R_0\geq e^2$ is chosen
sufficiently large, depending only on the structural constants under
discussion. Constants denoted by $C$ may change from line to line.
Non-universal constants have their dependencies stated explicitly.

\subsection{The logarithmic scale}
\label{subsec:log-scale}

For $R\geq R_0$, define
\[
 \rho(R)=\frac{R}{(\log R)^{\gamma-1}}.
\]
The scale $\rho(R)$ is comparable to $|\Phi'(R)|^{-1}$ for the phase
$\Phi(R)=(\log R)^\gamma$. We also define
\[
 a_\gamma(t)=\rho(t^{-1})^{-1}=t(\log(1/t))^{\gamma-1},
 \qquad 0<t<e^{-1}.
\]
The square functions below are truncated to $0<t<t_0$, where $t_0$ is fixed
in Subsection \ref{subsec:square-functions} after the Littlewood--Paley
cutoff has been chosen.

\begin{lemma}[Local stability of the logarithmic scale]
\label{lem:rho-local-stability}
Let $A>0$. If $R_0$ is sufficiently large depending on $A$ and $\gamma$, then
\[
 |R'-R|\leq A\rho(R), \qquad R\geq R_0,
\]
implies
\[
 C_A^{-1}\rho(R)\leq \rho(R')\leq C_A\rho(R).
\]
Moreover, the ratio $\rho(R')/\rho(R)$ tends to $1$ uniformly under this constraint as $R\to\infty$.
\end{lemma}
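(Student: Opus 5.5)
The plan is to write $R'=R(1+\theta)$ and control the two factors in $\rho(R')/\rho(R)=\frac{R'}{R}\bigl(\frac{\log R}{\log R'}\bigr)^{\gamma-1}$ separately. The hypothesis $|R'-R|\le A\rho(R)$ gives
\[
 |\theta|=\frac{|R'-R|}{R}\le \frac{A}{(\log R)^{\gamma-1}}=:\varepsilon(R).
\]
Since $\gamma>1$, $\varepsilon(R)\to0$ as $R\to\infty$.

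First choose $R_0$, depending on $A$ and $\gamma$, so that $\varepsilon(R)\le 1/2$ for $R\ge R_0$. Then $R'\in[R/2,3R/2]$. In particular $R'$ is positive and large, so $\log R'$ is defined and positive. Enlarging $R_0$ further (for instance $R_0\ge 4$ suffices) keeps $R'\ge2$. This matters because $\rho$ is only defined for large arguments, and we must make sure $R'$ lies in its domain.

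Next treat the linear factor. We have $R'/R=1+\theta\in[1-\varepsilon(R),1+\varepsilon(R)]$, which lies in $[1/2,3/2]$ and tends to $1$ uniformly.

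Then treat the logarithmic factor. Write $\log R'=\log R+\log(1+\theta)$, and use $|\log(1+\theta)|\le 2|\theta|\le 2\varepsilon(R)$ for $|\theta|\le1/2$. This gives
\[
 \Bigl|\frac{\log R'}{\log R}-1\Bigr|\le\frac{2\varepsilon(R)}{\log R}\le \frac{2A}{(\log R)^{\gamma}}.
\]
If $R_0$ is also taken with $\log R_0\ge 2$, the right side is at most $1/2$. The ratio then lies in $[1/2,3/2]$, and its $(\gamma-1)$-st power lies in $[(2/3)^{\gamma-1},2^{\gamma-1}]$. The convergence of this ratio to $1$ is again uniform in $R'$ satisfying the constraint.

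Multiplying the two factors gives the two-sided bound with $C_A$ depending only on $\gamma$, for example $C_A=2\cdot 2^{\gamma-1}\cdot(3/2)^{\gamma-1}$. It also shows that $\rho(R')/\rho(R)\to1$ uniformly, since each factor deviates from $1$ by at most $O_{A,\gamma}\bigl((\log R)^{-(\gamma-1)}\bigr)$. Here we use that $t\mapsto t^{\gamma-1}$ is Lipschitz near $1$.

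There is no real obstacle in this argument. The one point needing care is the bookkeeping of $R_0$: it must be large enough for the smallness $\varepsilon(R)\le1/2$, and it must keep $R'$ in the domain where $\log R'>0$ and $\rho(R')$ is defined.
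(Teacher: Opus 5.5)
Your proof is correct and is essentially the paper's argument. The paper also reduces to $|R'-R|/R\le A(\log R)^{1-\gamma}\to0$, concludes $R'\sim R$ and $\log R'\sim\log R$, and substitutes into the definition of $\rho$; you carry this out with explicit constants and an explicit $O\bigl((\log R)^{1-\gamma}\bigr)$ rate for $\rho(R')/\rho(R)\to1$. One cosmetic slip: $[(2/3)^{\gamma-1},2^{\gamma-1}]$ is the range of $(\log R/\log R')^{\gamma-1}$, which is the factor in $\rho(R')/\rho(R)$, not the range of the $(\gamma-1)$-st power of $\log R'/\log R$ as you wrote.
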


\begin{proof}
Since
\[
 \rho'(R)=(\log R)^{1-\gamma}\left(1-\frac{\gamma-1}{\log R}\right),
\]
we have $|\rho'(R)|\leq C\rho(R)/R$ for $R\geq R_0$. The assumption gives
\[
 \frac{|R'-R|}{R}\leq A(\log R)^{1-\gamma},
\]
which tends to zero. Hence $R'\sim R$ and $\log R'\sim \log R$, with constants depending only on $A$ and $\gamma$. Substitution in the definition of $\rho$ gives the claim.
\end{proof}

\subsection{Log-subdyadic balls}
\label{subsec:log-balls}

\begin{definition}[Log-subdyadic ball]
\label{def:log-subdyadic-ball}
Let $B\subset \mathbb{R}^d$ be a Euclidean ball. Write $r(B)$ for its radius and
\[
 R_B=\operatorname{dist}(B,0).
\]
The ball $B$ is called log-subdyadic of order $\gamma$ if $R_B\geq R_0$ and
\[
 c_0\rho(R_B)\leq r(B)\leq C_0\rho(R_B),
\]
where $0<c_0<C_0<\infty$ are fixed structural constants.
\end{definition}

\begin{definition}[Adapted normalized bump]
\label{def:adapted-bump}
Let $N$ be a fixed integer, chosen sufficiently large compared with the Sobolev exponents used below. A normalized bump adapted to a ball $B$ is a function $\Psi_B\in C_c^\infty(\mathbb{R}^d)$ such that
\[
 \operatorname{supp}\Psi_B\subset C_1 B
\]
for a fixed dilation constant $C_1$, and
\[
 |D^\nu\Psi_B(\xi)|\leq C_\nu r(B)^{-|\nu|},
 \qquad |\nu|\leq N.
\]
All constants in statements involving adapted bumps are required to be uniform over this class.
\end{definition}

\begin{lemma}[Stability inside adapted balls]
\label{lem:ball-scale-stability}
If $B$ is log-subdyadic and $\xi,\eta\in 2B$, then
\[
 |\xi|\sim |\eta|\sim R_B,
 \qquad
 \rho(|\xi|)\sim \rho(|\eta|)\sim \rho(R_B)\sim r(B).
\]
The implicit constants depend only on $d$, $\gamma$, and on the structural constants in Definition \ref{def:log-subdyadic-ball}.
\end{lemma}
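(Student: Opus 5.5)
The plan is to reduce everything to Lemma \ref{lem:rho-local-stability}: first show that every point of $2B$ has norm comparable to $R_B$, and then compare $\rho$ at these points with $\rho(R_B)$. Let $B=B(c,r)$ with $r=r(B)$. Since $R_B=\operatorname{dist}(B,0)$, we have $|c|=R_B+r$. For $\xi\in 2B$ this gives
\[
 R_B-r\leq |c|-2r\leq |\xi|\leq |c|+2r=R_B+3r,
\]
so that $\bigl||\xi|-R_B\bigr|\leq 3r\leq 3C_0\rho(R_B)$. The lower bound $|c|-2r=R_B-r$ uses that the enlarged ball $2B$ may reach inside $B$'s distance to the origin by at most $r$.

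Next I apply Lemma \ref{lem:rho-local-stability} with $A=3C_0$ and $R=R_B$. This requires $R_B\geq R_0$, which holds by Definition \ref{def:log-subdyadic-ball}, provided $R_0$ is taken large depending on $C_0$ and $\gamma$. The lemma then yields $\rho(|\xi|)\sim\rho(R_B)$ with constants depending only on $C_0$ and $\gamma$. Its proof also gives $|R'-R|/R\leq A(\log R)^{1-\gamma}\leq \tfrac12$ for $R_0$ large, so $\tfrac12R_B\leq|\xi|\leq\tfrac32R_B$. This gives $|\xi|\sim R_B$.

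The one point needing care is that Lemma \ref{lem:rho-local-stability} is stated for $R\geq R_0$, while $|\xi|$ itself could lie slightly below $R_0$. Here $\rho$ is defined only for $R\geq R_0$. Two fixes are available: enlarge $R_0$ and define $\rho$ by the same formula on $[e^{\gamma},\infty)$, or simply note that $|\xi|\geq R_B/2\geq R_0/2$ and $\rho$ makes sense there. Either way the comparison holds, since $\log|\xi|=\log R_B+O(1)$ and $\log R_B\geq\log R_0$ is large, which gives $(\log|\xi|)^{\gamma-1}\sim(\log R_B)^{\gamma-1}$ directly.

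It remains to combine the pieces. Applying the same argument to $\eta$ gives $|\xi|\sim|\eta|\sim R_B$ and $\rho(|\xi|)\sim\rho(|\eta|)\sim\rho(R_B)$. Finally, $\rho(R_B)\sim r(B)$ holds by the definition of a log-subdyadic ball, with constants $c_0,C_0$. All constants depend only on $\gamma$, $c_0$ and $C_0$; the dimension $d$ enters only trivially. No step is a genuine obstacle beyond the domain bookkeeping described above.
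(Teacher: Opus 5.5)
Your proof is correct and follows the paper's argument: you bound $\bigl||\xi|-R_B\bigr|$ by a fixed multiple of $r(B)\le C_0\rho(R_B)$, invoke Lemma~\ref{lem:rho-local-stability}, and read $\rho(R_B)\sim r(B)$ off the definition. You also supply three details the paper leaves implicit: the computation $|c|=R_B+r(B)$, the requirement that $R_0$ be large depending on $C_0$ and $\gamma$, and the check that $\rho(|\xi|)$ is defined even when $|\xi|<R_0$.
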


\begin{proof}
For $\xi\in 2B$, the distance from $|\xi|$ to $R_B$ is bounded by a fixed multiple of $r(B)$, hence by a fixed multiple of $\rho(R_B)$. Lemma \ref{lem:rho-local-stability} gives $\rho(|\xi|)\sim \rho(R_B)$, and the remaining comparabilities follow in the same way.
\end{proof}

\subsection{Fourier multipliers and maximal operators}

For a measurable symbol $m$ of at most polynomial growth, we write
\[
        T_m f=\mathcal{F}^{-1}(m\widehat f)=:(m\widehat f)^\vee
\]
initially on Schwartz functions. This convention includes the logarithmically
growing model symbols that occur when $\beta<0$. We write $M$ for the
Hardy--Littlewood
maximal operator,
\[
        Mf(x)
        =
        \sup_{r>0}
        \frac{1}{|B(x,r)|}
        \int_{B(x,r)} |f(y)|\,\mathrm{d}y,
\]
and $M^j$ for its $j$-fold composition.

\subsection{Square functions}
\label{subsec:square-functions}

Fix a radial function $\phi\in\mathcal{S}(\mathbb{R}^d)$ such that
\[
 \operatorname{supp}\widehat{\phi}
 \subset \{\xi:1/2\leq |\xi|\leq 2\},
 \qquad
 \int_0^\infty \widehat{\phi}(t\xi)\,\frac{\mathrm{d}t}{t}=1,
 \quad \xi\neq0,
\]
and set $\phi_t(x)=t^{-d}\phi(x/t)$. After fixing $k_0$ with
$2^{k_0}\geq R_0$, choose an integer $k_1\geq k_0+2$ such that
\[
        t_0=2^{-k_1}<e^{-1}.
\]
Then
\[
        \operatorname{supp}\widehat{\phi_t}
        \subset \{\xi:|\xi|\geq 2^{k_0+1}\},
        \qquad 0<t<t_0,
\]
so the square functions do not detect the low-frequency portion of a
multiplier.

Define the truncated Calder\'on reproducing multiplier
\[
        p_{\mathrm{hi}}(\xi)
        =
        \int_0^{t_0}\widehat{\phi}(t\xi)\,\frac{\mathrm{d}t}{t},
        \qquad
        P_{\mathrm{hi}}=T_{p_{\mathrm{hi}}}.
\]
The support and normalization of $\widehat\phi$ imply
\[
        p_{\mathrm{hi}}(\xi)=0
        \quad\text{if }|\xi|\leq (2t_0)^{-1},
        \qquad
        p_{\mathrm{hi}}(\xi)=1
        \quad\text{if }|\xi|\geq 2t_0^{-1}.
\]
We also fix a radial cutoff $\chi_{\mathrm{hi}}\in C^\infty(\mathbb{R}^d)$
such that
\[
        \chi_{\mathrm{hi}}(\xi)=0
        \quad\text{for }|\xi|\leq 2t_0^{-1},
        \qquad
        \chi_{\mathrm{hi}}(\xi)=1
        \quad\text{for }|\xi|\geq 4t_0^{-1}.
\]
In particular,
\[
        p_{\mathrm{hi}}\chi_{\mathrm{hi}}
        =
        \chi_{\mathrm{hi}}.
\]

\begin{definition}[Log-subdyadic square functions]
\label{def:log-square-functions}
For $\beta\in\mathbb{R}$, define
\[
 g_{\log,\gamma,\beta}(f)(x)
 =\left(
 \int_0^{t_0}\int_{|y-x|\leq a_\gamma(t)}
 |f*\phi_t(y)|^2
 \frac{\mathrm{d}y}{a_\gamma(t)^d}
 (\log(1/t))^{2\beta}\frac{\mathrm{d}t}{t}
 \right)^{1/2}.
\]
For $\lambda>1$, define
\[
 g^{*}_{\log,\gamma,\beta,\lambda}(f)(x)
 =\left(
 \int_0^{t_0}\int_{\mathbb{R}^d}
 |f*\phi_t(y)|^2
 \left(1+\frac{|x-y|}{a_\gamma(t)}\right)^{-d\lambda}
 \frac{\mathrm{d}y}{a_\gamma(t)^d}
 (\log(1/t))^{2\beta}\frac{\mathrm{d}t}{t}
 \right)^{1/2}.
\]
\end{definition}

Define the normalized kernel
\begin{equation}
\label{eq:robust-kernel}
 K_t^\lambda(z)=a_\gamma(t)^{-d}
 \left(1+\frac{|z|}{a_\gamma(t)}\right)^{-d\lambda}.
\end{equation}
For $\lambda>1$, $\|K_t^\lambda\|_{L^1}\leq C$. We shall also use the
elementary convolution stability
\begin{equation}
\label{eq:kernel-convolution}
 K_t^\lambda*K_t^\lambda(z)\leq C K_t^\lambda(z),
\end{equation}
uniformly in $0<t<t_0$. Indeed, after the change of variables
$u=a_\gamma(t)^{-1}y$, this reduces to
\[
        \int_{\mathbb{R}^d}
        (1+|u|)^{-d\lambda}
        (1+|v-u|)^{-d\lambda}\,\mathrm{d}u
        \lesssim
        (1+|v|)^{-d\lambda}.
\]
To prove this, cover the domain of integration by the two regions
$|u|\geq |v|/2$ and $|v-u|\geq |v|/2$. On the first region,
\[
        (1+|u|)^{-d\lambda}
        \lesssim
        (1+|v|)^{-d\lambda},
\]
and the remaining factor is integrable; the second region is identical. The
condition $\lambda>1$ is exactly what gives integrability in dimension
$d$.

\subsection{Logarithmic Miyachi conditions}
\label{subsec:log-miyachi}

\begin{hypothesis}[Localized logarithmic Miyachi condition]
\label{hyp:localized-log-miyachi}
Let $\sigma>d/2$. A multiplier $m$ satisfies the localized logarithmic Miyachi condition of order $(\gamma,\beta,\sigma)$ if, for every log-subdyadic ball $B$, every adapted normalized bump $\Psi_B$, and every $0\leq \theta\leq \sigma$,
\[
 (\log R_B)^\beta\rho(R_B)^\theta |B|^{-1/2}
 \|m\Psi_B\|_{\dot H^\theta}\leq C_m.
\]
The constant $C_m$ is independent of $B$ and $\Psi_B$.
\end{hypothesis}

\begin{lemma}[Stability under complex conjugation]
\label{lem:log-miyachi-conjugation}
If $m$ satisfies Hypothesis \ref{hyp:localized-log-miyachi}, then
$\overline m$ satisfies the same hypothesis with the same constant $C_m$.
\end{lemma}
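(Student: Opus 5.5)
The key observation is that the class of adapted normalized bumps is closed under complex conjugation, and the homogeneous Sobolev seminorm is invariant under conjugation. Let $B$ be any log-subdyadic ball, $\Psi_B$ an adapted normalized bump, and $0\le\theta\le\sigma$. Then
\[
 \overline m\,\Psi_B=\overline{m\,\overline{\Psi_B}}.
\]
Moreover, $\overline{\Psi_B}\in C_c^\infty(\mathbb{R}^d)$ has the same support as $\Psi_B$, so $\operatorname{supp}\overline{\Psi_B}\subset C_1B$. It also satisfies $|D^\nu\overline{\Psi_B}|=|\overline{D^\nu\Psi_B}|\le C_\nu r(B)^{-|\nu|}$ for $|\nu|\le N$. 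Hence $\overline{\Psi_B}$ is itself an adapted normalized bump for $B$, with the same constants as in Definition~\ref{def:adapted-bump}. If the definition is read as allowing complex-valued bumps, this needs no further comment. If bumps were intended to be real-valued, then $\overline{\Psi_B}=\Psi_B$ and the step is trivial.

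Next I would verify that $\|\overline{F}\|_{\dot H^\theta}=\|F\|_{\dot H^\theta}$ for $F=m\overline{\Psi_B}$, which is compactly supported. Using $\widehat{\overline F}(x)=\overline{\widehat F(-x)}$ and the definition $\|F\|_{\dot H^\theta}^2=\int|x|^{2\theta}|\widehat F(x)|^2\,dx$ (with the paper's normalization of the Fourier transform), the change of variables $x\mapsto -x$ preserves the weight $|x|^{2\theta}$. This gives equality of the seminorms. For integer $\theta$ one could equally note that $D^\nu\overline F=\overline{D^\nu F}$ in the distributional sense.

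Combining the two steps, for each admissible $B$, $\Psi_B$, $\theta$:
\[
 (\log R_B)^\beta\rho(R_B)^\theta|B|^{-1/2}\|\overline m\Psi_B\|_{\dot H^\theta}
 =(\log R_B)^\beta\rho(R_B)^\theta|B|^{-1/2}\|m\overline{\Psi_B}\|_{\dot H^\theta}\le C_m.
\]
The inequality holds because Hypothesis~\ref{hyp:localized-log-miyachi} applies to $m$ with the adapted bump $\overline{\Psi_B}$. The constant is unchanged, and $C_m$ is uniform in $B$ and $\Psi_B$, which completes the argument.

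The only point needing care is the closure of the bump class under conjugation. This is immediate from Definition~\ref{def:adapted-bump}, since all its conditions are stated in terms of support and absolute values of derivatives. I do not expect a genuine obstacle.
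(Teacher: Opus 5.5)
Your proposal is correct and follows the paper's proof: you show that $\overline{\Psi_B}$ is again an adapted normalized bump, write $\overline m\,\Psi_B=\overline{m\,\overline{\Psi_B}}$, and use that conjugation preserves the $\dot H^\theta$ seminorm via $\widehat{\overline F}(\xi)=\overline{\widehat F(-\xi)}$ and the evenness of $|\xi|^{\theta}$. The hypothesis applied to $m$ with the bump $\overline{\Psi_B}$ then gives the bound with the same constant $C_m$.
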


\begin{proof}
Let $B$ be log-subdyadic and let $\Psi_B$ be an adapted normalized bump.
Then $\overline{\Psi_B}$ is an adapted normalized bump with the same support
and derivative bounds, and
\[
        \overline m\,\Psi_B
        =
        \overline{m\,\overline{\Psi_B}}.
\]
For every $\theta\geq0$, complex conjugation preserves the homogeneous
Sobolev norm:
\[
        \|\overline F\|_{\dot H^\theta}
        =
        \|F\|_{\dot H^\theta}.
\]
Indeed,
$\widehat{\overline F}(\xi)=\overline{\widehat F(-\xi)}$, and the multiplier
$|\xi|^\theta$ is even. Applying Hypothesis
\ref{hyp:localized-log-miyachi} to the adapted bump
$\overline{\Psi_B}$ proves the claim.
\end{proof}

\begin{remark}[Comparison with classical symbol classes]
The relation between the logarithmic Miyachi condition and the classical
Mikhlin class with logarithmic decay is discussed in Remark
\ref{rem:comparison-symbol-classes}, where the model multiplier is also shown
to make the inclusion strict.
\end{remark}

\begin{lemma}[Pointwise derivative condition implies localized Sobolev control]
\label{lem:pointwise-to-sobolev}
Assume that, for every multi-index $\nu$ with $|\nu|\leq N$, where $N>\sigma$,
\[
 |D^\nu m(\xi)|\leq C_\nu (\log |\xi|)^{-\beta}\rho(|\xi|)^{-|\nu|},
 \qquad |\xi|\geq R_0.
\]
Then $m$ satisfies Hypothesis \ref{hyp:localized-log-miyachi}.
\end{lemma}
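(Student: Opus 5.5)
Fix a log-subdyadic ball $B$ and an adapted bump $\Psi_B$. The plan is to bound the integer Sobolev norms $\|m\Psi_B\|_{\dot H^j}$ for $j=0,1,\dots,N$ by the Leibniz rule. The fractional orders $0\leq\theta\leq\sigma$ then follow by interpolation. With $J=\lceil\sigma\rceil\leq N$, the log-convexity inequality $\|F\|_{\dot H^\theta}\leq\|F\|_{L^2}^{1-\theta/J}\|F\|_{\dot H^J}^{\theta/J}$ (Hölder on the Fourier side) gives the weighted bound for each $\theta$, provided we have
\[
 \|m\Psi_B\|_{\dot H^j}\leq C\,(\log R_B)^{-\beta}\rho(R_B)^{-j}|B|^{1/2},\qquad j\in\{0,J\}.
\]
Indeed, the factors $(\log R_B)^{-\beta}|B|^{1/2}$ interpolate trivially, and $\rho(R_B)^{-j}$ interpolates to $\rho(R_B)^{-\theta}$.

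To prove the integer bound, expand
\[
 D^\nu(m\Psi_B)=\sum_{\mu\leq\nu}\binom{\nu}{\mu}D^\mu m\,D^{\nu-\mu}\Psi_B .
\]
On $\operatorname{supp}\Psi_B\subset C_1B$ we have $|\xi|\sim R_B$ and $\rho(|\xi|)\sim\rho(R_B)\sim r(B)$ by Lemma \ref{lem:ball-scale-stability}. Here we need that lemma on $C_1B$ rather than $2B$. This follows in the same way from Lemma \ref{lem:rho-local-stability} with $A$ depending on $C_1,C_0$, after enlarging $R_0$. We also need $|\xi|\geq R_0$ on $C_1B$ so that the derivative hypothesis applies. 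This holds because $\operatorname{dist}(C_1B,0)\geq R_B-C_1C_0\rho(R_B)\geq R_B/2$ for $R_B$ large, after possibly adjusting $R_0$ in the hypothesis. Then
\[
 |D^\mu m|\,|D^{\nu-\mu}\Psi_B|\lesssim(\log|\xi|)^{-\beta}\rho(|\xi|)^{-|\mu|}r(B)^{-|\nu-\mu|}\lesssim(\log R_B)^{-\beta}\rho(R_B)^{-|\nu|},
\]
using $\log|\xi|\sim\log R_B$, valid for either sign of $\beta$. Integrating the square over $C_1B$, whose measure is $\lesssim|B|$, gives $\|D^\nu(m\Psi_B)\|_{L^2}\lesssim(\log R_B)^{-\beta}\rho(R_B)^{-|\nu|}|B|^{1/2}$. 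Since $\|F\|_{\dot H^j}\sim\sum_{|\nu|=j}\|D^\nu F\|_{L^2}$, we obtain the integer bounds. Multiplying by $(\log R_B)^\beta\rho(R_B)^\theta|B|^{-1/2}$ yields a constant depending only on the $C_\nu$, $d,\gamma,\sigma$ and the structural constants, uniform in $B$ and $\Psi_B$.

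The only delicate point is the uniform comparability $\rho(|\xi|)\sim\rho(R_B)$ and $\log|\xi|\sim\log R_B$ on the slightly enlarged support $C_1B$, together with ensuring that this set stays in $\{|\xi|\geq R_0\}$. Both follow from Lemma \ref{lem:rho-local-stability}, since the ratio $\rho(R_B)/R_B\to0$. The interpolation step is standard, using $J\leq N$, which holds because $N>\sigma$.
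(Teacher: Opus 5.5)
Your proposal is correct and follows the paper's own argument: Leibniz' rule combined with the scale stability of Lemma \ref{lem:ball-scale-stability} gives the integer-order bounds, and interpolation gives the fractional orders. You interpolate between $0$ and $\lceil\sigma\rceil$ via H\"older on the Fourier side rather than between adjacent integers, which is equivalent; you also treat explicitly two points the paper passes over, namely that stability is needed on the enlarged support $C_1B$ and that $|\xi|\geq R_0$ must hold there, which requires taking the threshold for the balls slightly larger than the threshold in the derivative hypothesis.
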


\begin{proof}
For integer $\theta=n$, Leibniz' rule and Lemma \ref{lem:ball-scale-stability} give, for $|\nu|=n$,
\[
 \|D^\nu(m\Psi_B)\|_{L^2}
 \leq C_\nu (\log R_B)^{-\beta}\rho(R_B)^{-n}|B|^{1/2}.
\]
This yields the required estimate for integer $n$. Fractional orders follow by interpolation between adjacent integer Sobolev estimates. The constants depend on finitely many derivative bounds for $m$ and on the bump normalization.
\end{proof}

%% file: 30_decomposition_BB.tex
\section{Log-subdyadic decompositions}
\label{sec:log-decomposition}

\subsection{Lattice partitions}
\label{subsec:lattice-partitions}

Let $A_k=\{\xi:2^k\leq |\xi|\leq 2^{k+1}\}$, and set
\[
 r_k=\rho(2^k)\sim \frac{2^k}{k^{\gamma-1}}
\]
for $k\geq k_0$, where $2^{k_0}\geq R_0$. Choose a smooth dyadic partition $\{\eta_k\}_{k\geq k_0}$ on high frequencies with $\operatorname{supp}\eta_k\subset\{\xi:|\xi|\sim 2^k\}$. Choose $\nu\in C_c^\infty(\mathbb{R}^d)$ such that
\[
 \sum_{\ell\in\mathbb{Z}^d}\nu(\zeta-\ell)=1,
 \qquad \zeta\in\mathbb{R}^d.
\]
Set
\[
 \nu_{k,\ell}(\xi)=\nu(r_k^{-1}\xi-\ell),
 \qquad
 \psi_{k,\ell}(\xi)=\eta_k(\xi)\nu_{k,\ell}(\xi).
\]
Then
\[
 \sum_{k\geq k_0}\sum_{\ell\in\mathbb{Z}^d}\psi_{k,\ell}(\xi)=1,
 \qquad |\xi|\geq 2^{k_0+1},
\]
and each $\psi_{k,\ell}$ is supported in a ball of radius comparable to $r_k$. The number of relevant balls in $A_k$ is $O(k^{d(\gamma-1)})$.

Throughout this section $\psi_{k,\ell}$, $\eta_k$, and $\nu_{k,\ell}$ denote frequency cutoffs, and $\check\psi_{k,\ell}$, $\check\eta_k$, and $\check\nu_{k,\ell}$ denote their inverse Fourier transforms. Thus $f*\check\psi_{k,\ell}$ is the Fourier projection associated with the cutoff $\psi_{k,\ell}$.

\begin{remark}[Why the lattice is retained]
\label{rem:lattice-needed}
The decoupling estimate only uses bounded overlap and scale stability. The recoupling estimate below uses the lattice structure through a local Bessel inequality, so it is stated for this particular partition.
\end{remark}

\subsection{Decoupling}
\label{subsec:decoupling}

\begin{proposition}[Log-subdyadic decoupling]
\label{prop:log-decoupling}
Let $\lambda>1$. If $f_{k,\ell}$ has Fourier support contained in $\operatorname{supp}\psi_{k,\ell}$, then
\[
 g_{\log,\gamma,\beta}\left(\sum_{k,\ell} f_{k,\ell}\right)(x)^2
 \leq C\sum_{k,\ell}g^{*}_{\log,\gamma,\beta,\lambda}(f_{k,\ell})(x)^2.
\]
The constant depends on $d$, $\gamma$, $\beta$, $\lambda$, and on the fixed cutoffs.
\end{proposition}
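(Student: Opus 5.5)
Fix $x$ and $t\in(0,t_0)$, and set $f=\sum_{k,\ell}f_{k,\ell}$. The plan is to bound the inner integrand of $g_{\log,\gamma,\beta}(f)(x)^2$ pointwise in $t$, and only then integrate in $t$.

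\emph{Step 1 (frequency localization in $t$).} Since $\operatorname{supp}\widehat{\phi_t}\subset\{t^{-1}/2\le|\xi|\le 2t^{-1}\}$ and $f_{k,\ell}$ has Fourier support in $\operatorname{supp}\psi_{k,\ell}\subset\{|\xi|\sim2^k\}$, only $k$ with $2^k\sim t^{-1}$ contribute to $f*\phi_t$. This leaves a bounded number of generations, with $|k-\log_2(1/t)|\le C$. For each such $k$, Lemma \ref{lem:rho-local-stability} gives $r_k=\rho(2^k)\sim\rho(t^{-1})=a_\gamma(t)^{-1}$. Also $\log(1/t)\sim k$, so the weights $(\log(1/t))^{2\beta}$ are unaffected.

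\emph{Step 2 (almost orthogonality on the aperture ball).} For fixed $k$, set $F_\ell=f_{k,\ell}*\phi_t$. Its Fourier support lies in a ball of radius $\sim r_k\sim a_\gamma(t)^{-1}$ centered at $r_k\ell$. I would localize in space with a bump $\chi_{x,t}$ satisfying $\chi_{x,t}\ge 1$ on $B(x,a_\gamma(t))$, $\widehat{\chi_{x,t}}$ supported in $B(0,c\,a_\gamma(t)^{-1})$, and $|\chi_{x,t}(y)|\lesssim (1+|x-y|/a_\gamma(t))^{-N}$. Such a $\chi_{x,t}$ is the modulus squared of a rescaled Schwartz function with compactly supported Fourier transform, translated to $x$. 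Then the functions $F_\ell\chi_{x,t}$ have Fourier supports of radius $\lesssim r_k$ around the lattice points $r_k\ell$, so they overlap boundedly; the overlap count depends only on $d$ and the cutoffs. Plancherel then gives
\[
\int_{|y-x|\le a_\gamma(t)}\Bigl|\sum_\ell F_\ell\Bigr|^2\mathrm{d}y\le\int\Bigl|\sum_\ell F_\ell\chi_{x,t}\Bigr|^2\le C\sum_\ell\int|F_\ell|^2|\chi_{x,t}|^2,
\]
and $|\chi_{x,t}(y)|^2\lesssim(1+|x-y|/a_\gamma(t))^{-d\lambda}$ if $N$ is large. The sum over the boundedly many $k$ is handled by Cauchy--Schwarz.

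\emph{Step 3 (integration in $t$).} Dividing by $a_\gamma(t)^d$, multiplying by $(\log(1/t))^{2\beta}$ and integrating $\mathrm{d}t/t$ over $(0,t_0)$ gives exactly $\sum_{k,\ell}g^*_{\log,\gamma,\beta,\lambda}(f_{k,\ell})(x)^2$.

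The main obstacle is uniformity in Step 2. The radius ratio $r_k a_\gamma(t)$ must be bounded above and below uniformly in $k$, even though the number of pieces $k^{d(\gamma-1)}$ grows. This is exactly where Lemma \ref{lem:rho-local-stability} enters, with $R=t^{-1}$ and $R'=2^k$; it applies because $|R'-R|\lesssim R$ is not of size $A\rho(R)$, so I would argue directly: $\log 2^k/\log t^{-1}\to1$. I would fix the constant $c$ in $\chi_{x,t}$ only after this comparability is established, so that the bounded-overlap count is uniform.
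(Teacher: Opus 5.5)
Your proposal is correct and is essentially the paper's argument. The paper uses the nonnegative band-limited majorant $W_{t,x}=a_\gamma(t)^{-d}\Phi((x-\cdot)/a_\gamma(t))$ with $\Phi=|\check\vartheta(\varepsilon\,\cdot)|^2$, observes that only boundedly many cross terms $\int h_{k,\ell}\overline{h_{k',\ell'}}W_{t,x}$ can be nonzero, and finishes with the same domination $W_{t,x}\lesssim K_t^\lambda(x-\cdot)$. This is your Plancherel and bounded-overlap step for the products $F_\ell\chi_{x,t}$, with $W_{t,x}$ playing the role of $a_\gamma(t)^{-d}|\chi_{x,t}|^2$. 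You are also right that Lemma \ref{lem:rho-local-stability} does not literally apply to $R'=2^k$, $R=t^{-1}$, and that $r_k\sim a_\gamma(t)^{-1}$ should be checked directly from $\log 2^k/\log(1/t)\to1$; the paper only asserts this comparability.
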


\begin{proof}
It is enough to prove the estimate for Schwartz functions and finite sums of
frequency pieces; the general case follows by the usual density and limiting
arguments. We first choose a globally nonnegative band-limited majorant. Let
$\vartheta\in C_c^\infty(\mathbb{R}^d)$ satisfy
$\check\vartheta(0)\neq0$. After a fixed dilation, the function
\[
        \Phi(z)=|\check\vartheta(\varepsilon z)|^2
\]
with $\varepsilon>0$ sufficiently small satisfies
\[
        \Phi\geq0\quad\text{on }\mathbb{R}^d,
        \qquad
        \Phi\geq c>0\quad\text{on }|z|\leq1,
        \qquad
        \operatorname{supp}\widehat\Phi
        \subset\{|\zeta|\leq C_0\}.
\]
Thus the corresponding smoothed square function majorizes
$g_{\log,\gamma,\beta}$.

Fix $t$, and write
\[
        h_{k,\ell}=f_{k,\ell}*\phi_t,
        \qquad
        W_{t,x}(y)
        =
        a_\gamma(t)^{-d}
        \Phi\left(\frac{x-y}{a_\gamma(t)}\right).
\]
The Fourier transform of $W_{t,x}$ in the $y$-variable is supported in a
ball of radius
$O(a_\gamma(t)^{-1})=O(\rho(t^{-1}))$. Moreover,
$\widehat{\phi_t}$ restricts the frequency variable to
$|\xi|\sim t^{-1}$, so only indices $(k,\ell)$ with
$2^k\sim t^{-1}$ contribute.

For each such $(k,\ell)$, let $\mathcal N(k,\ell)$ denote the set of indices
$(k',\ell')$ for which the cross term
\[
        \int_{\mathbb{R}^d}
        h_{k,\ell}(y)\overline{h_{k',\ell'}(y)}W_{t,x}(y)
        \,\mathrm{d}y
\]
can be nonzero. The support properties above, the scale relation
$r_k\sim\rho(t^{-1})$, and the bounded overlap of the lattice partition give
\[
        \#\mathcal N(k,\ell)\leq C
\]
uniformly in $k,\ell,t$. Since $W_{t,x}\geq0$, Cauchy's inequality gives
\[
\begin{aligned}
&\int_{\mathbb{R}^d}
\left|\sum_{k,\ell}h_{k,\ell}(y)\right|^2
W_{t,x}(y)\,\mathrm{d}y \\
&\qquad\lesssim
\sum_{k,\ell}
\int_{\mathbb{R}^d}|h_{k,\ell}(y)|^2W_{t,x}(y)\,\mathrm{d}y .
\end{aligned}
\]
Finally, $\Phi$ is Schwartz, so for the fixed $\lambda>1$,
\[
        W_{t,x}(y)
        \lesssim
        K_t^\lambda(x-y).
\]
Multiplying by $(\log(1/t))^{2\beta}$ and integrating in $t$ proves the
asserted estimate.
\end{proof}

\subsection{Recoupling}
\label{subsec:recoupling}

\begin{lemma}[Local lattice Bessel inequality]
\label{lem:lattice-bessel}
For the lattice partition above,
\[
 \sum_{\ell\in\mathbb{Z}^d}|F*\check\nu_{k,\ell}(y)|^2
 \leq C |F|^2*\Theta_k(y),
\]
where $\Theta_k$ is a nonnegative Schwartz majorant at spatial scale
$r_k^{-1}$ with uniformly bounded $L^1$ norm. The constant $C$ is independent
of $k$, $F$, and $y$.
\end{lemma}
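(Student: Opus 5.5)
Everything reduces to the unit-scale case $k$-independently by rescaling. Set $G(\zeta)=F(r_k^{-1}\zeta)$ in physical space, so that $\widehat F(\xi)=r_k^{-d}\widehat G(r_k^{-1}\xi)$. Then $F*\check\nu_{k,\ell}(y)$ equals $G*\check\nu_{0,\ell}(r_k y)$, where $\check\nu_{0,\ell}(z)=e^{2\pi i\ell\cdot z}\check\nu(z)$; this uses $\nu_{k,\ell}(\xi)=\nu(r_k^{-1}\xi-\ell)$. So it suffices to prove, uniformly in $G$ and $y$,
\[
 \sum_{\ell\in\mathbb{Z}^d}\bigl|G*\check\nu_{0,\ell}(y)\bigr|^2\leq C\,|G|^2*\Theta(y)
\]
for a fixed nonnegative Schwartz $\Theta$ with $\|\Theta\|_{L^1}\leq C$. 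Undoing the dilation then gives $\Theta_k(z)=r_k^{d}\Theta(r_kz)$. This has spatial scale $r_k^{-1}$ and $L^1$ norm $\|\Theta\|_{L^1}$, independent of $k$.

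To prove the unit-scale estimate, write
\[
 G*\check\nu_{0,\ell}(y)=\int G(y-z)\check\nu(z)e^{2\pi i\ell\cdot z}\,\mathrm{d}z .
\]
Split $\mathbb{R}^d=\bigcup_{n\in\mathbb{Z}^d}(n+Q)$ with $Q=[0,1)^d$, and set $H_y(z)=G(y-z)\check\nu(z)$. Then
\[
 G*\check\nu_{0,\ell}(y)=\int_Q\Bigl(\sum_n H_y(z+n)\Bigr)e^{2\pi i\ell\cdot z}\,\mathrm{d}z ,
\]
which is the $\ell$-th Fourier coefficient, on the torus $\mathbb{R}^d/\mathbb{Z}^d$, of the periodization $P_y(z)=\sum_n H_y(z+n)$ (up to the sign convention for $\ell$). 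Bessel's inequality (Parseval) on the torus gives
\[
 \sum_\ell|G*\check\nu_{0,\ell}(y)|^2\le\int_Q|P_y(z)|^2\,\mathrm{d}z .
\]

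Next, weighted Cauchy--Schwarz with weights $(1+|n|)^{-2M}$, for some $M>d/2$, gives
\[
 |P_y(z)|^2\le C_M\sum_n(1+|n|)^{2M}|H_y(z+n)|^2 .
\]
Since $\check\nu$ is Schwartz, $(1+|n|)^{2M}|\check\nu(z+n)|^2\lesssim(1+|z+n|)^{-2M'}$ for $z\in Q$ and any $M'$. Summing over $n$ and integrating over $Q$ therefore yields
\[
 \int_Q|P_y|^2\lesssim\int_{\mathbb{R}^d}|G(y-w)|^2(1+|w|)^{-d-1}\,\mathrm{d}w .
\]
This is $|G|^2*\Theta_0(y)$ with $\Theta_0=(1+|\cdot|)^{-d-1}$, which is integrable. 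To obtain a Schwartz majorant I will instead keep $(1+|w|)^{-2M'}$ with $M'$ large, or replace it by any Schwartz $\Theta\ge(1+|w|)^{-2M'}$ such as $e^{-|w|}$ smoothed. Either is a valid choice of $\Theta$.

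The main point requiring care is uniformity in $k$. The rescaling shows that $k$ enters only through the dilation, because the lattice spacing $r_k$ coincides with the cutoff scale. This is exactly where the lattice structure, as opposed to an arbitrary bounded-overlap cover, is used (cf. Remark~\ref{rem:lattice-needed}). The periodization step requires only absolute convergence, which holds for $G$ Schwartz, or locally $L^2$ with polynomial growth, since $\check\nu$ decays rapidly. The constant depends only on $\nu$ and $d$.
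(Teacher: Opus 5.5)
Your argument is the paper's argument in all essentials. You rescale to $r_k=1$, periodize $z\mapsto G(y-z)\check\nu(z)$ over $\mathbb{Z}^d$, apply Parseval on the torus, then use Cauchy--Schwarz and the rapid decay of $\check\nu$, and finally undo the dilation to get $\Theta_k(x)=r_k^d\Theta(r_kx)$. The only structural difference is the weight in Cauchy--Schwarz. The paper splits $|\check\nu|=|\check\nu|^{1/2}|\check\nu|^{1/2}$ and uses $\sup_v\sum_m|\check\nu(v-m)|<\infty$, which lands directly on the kernel $|\check\nu|$. You use $(1+|n|)^{-2M}$ with $M>d/2$. Both choices work up to that point.

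There is, however, a false step at the end, exactly where you produce the Schwartz majorant that the lemma asserts. No Schwartz function can satisfy $\Theta\geq(1+|w|)^{-2M'}$, because Schwartz functions decay faster than every power; indeed $e^{-|w|}<(1+|w|)^{-2M'}$ for large $|w|$. Nor is $(1+|w|)^{-2M'}$ itself Schwartz, for any $M'$. So neither of your two proposed choices gives what the statement claims.

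The loss comes from replacing $|\check\nu(z+n)|^2$ by a polynomial bound; keep the rapidly decaying factor instead. For $z\in Q$ one has $1+|n|\leq(1+\sqrt d)(1+|z+n|)$, hence
\[
(1+|n|)^{2M}|\check\nu(z+n)|^2\lesssim\Theta(z+n),\qquad \Theta(w):=(1+|w|^2)^{M}|\check\nu(w)|^2 .
\]
Here $\Theta$ is a nonnegative Schwartz function, being a product of $\check\nu$, $\overline{\check\nu}$, and a smooth factor of polynomial growth. Summing over $n$ and integrating over $Q$ then gives $\int_Q|P_y|^2\lesssim|G|^2*\Theta(y)$ directly.

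For the only use of the lemma, the recoupling estimate in Proposition~\ref{prop:log-recoupling}, your polynomial kernel with $2M'\geq d\lambda$ would actually suffice, by the same two-region argument as in \eqref{eq:kernel-convolution}. But it does not prove the lemma as stated.
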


\begin{proof}
By dilation it suffices to prove the estimate for $r_k=1$. In that case write $\nu_\ell(\xi)=\nu(\xi-\ell)$. For fixed $y$, the quantity $F*\check\nu_\ell(y)$ can be written, up to harmless normalizing constants, as
\[
 e^{2\pi i\ell\cdot y}\widehat{H_y}(\ell),
 \qquad
 H_y(u)=F(u)\check\nu(y-u).
\]
Parseval's identity on the torus, followed by the Poisson summation formula, gives
\[
 \sum_{\ell\in\mathbb{Z}^d}|F*\check\nu_\ell(y)|^2
 \leq C\int_{[0,1]^d}\sum_{m\in\mathbb{Z}^d}
 |F(u+m)|^2|\check\nu(y-u-m)|\,
 \sum_{m'\in\mathbb{Z}^d}|\check\nu(y-u-m')|\,\mathrm{d}u.
\]
Since $\check\nu$ is Schwartz,
\[
 \sup_{v\in\mathbb{R}^d}\sum_{m\in\mathbb{Z}^d}|\check\nu(v-m)|<\infty.
\]
Therefore
\[
 \sum_{\ell\in\mathbb{Z}^d}|F*\check\nu_\ell(y)|^2
 \leq C\int_{\mathbb{R}^d}|F(u)|^2|\check\nu(y-u)|\,\mathrm{d}u.
\]
Rescaling gives the stated estimate with
\[
        \Theta_k(x)=r_k^d\Theta(r_kx),
\]
where $\Theta\geq0$ is a fixed Schwartz majorant of $|\check\nu|$.
\end{proof}

\begin{proposition}[Log-subdyadic recoupling]
\label{prop:log-recoupling}
Let $\lambda>1$. For the lattice partition $\{\psi_{k,\ell}\}$,
\[
 \sum_{k,\ell}g^{*}_{\log,\gamma,\beta,\lambda}(f*\check\psi_{k,\ell})(x)^2
 \leq C g^{*}_{\log,\gamma,\beta,\lambda}(f)(x)^2.
\]
The constant depends only on $d$, $\gamma$, $\lambda$, and on the fixed
cutoffs.
\end{proposition}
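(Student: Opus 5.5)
We fix $t\in(0,t_0)$ and prove the estimate with $t$ frozen, then multiply by $(\log(1/t))^{2\beta}$ and integrate in $\mathrm{d}t/t$. Since the weight $(\log(1/t))^{2\beta}$ does not depend on $(k,\ell)$, the constant will not depend on $\beta$.

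For fixed $t$, write $F_t=f*\phi_t$. Then
\[
f*\check\psi_{k,\ell}*\phi_t=F_t*\check\psi_{k,\ell}=(F_t*\check\eta_k)*\check\nu_{k,\ell}.
\]
Because $\operatorname{supp}\widehat{\phi_t}\subset\{|\xi|\sim t^{-1}\}$ and $\operatorname{supp}\eta_k\subset\{|\xi|\sim2^k\}$, only $O(1)$ values of $k$ contribute, namely those with $2^k\sim t^{-1}$. For each such $k$, the scale relation $r_k=\rho(2^k)\sim\rho(t^{-1})=a_\gamma(t)^{-1}$ follows from Lemma~\ref{lem:rho-local-stability}.

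Fix one contributing $k$ and set $G=F_t*\check\eta_k$. Lemma~\ref{lem:lattice-bessel} gives
\[
\sum_\ell |G*\check\nu_{k,\ell}(y)|^2\le C\,|G|^2*\Theta_k(y).
\]
Since $\Theta_k$ lives at spatial scale $r_k^{-1}\sim a_\gamma(t)$ and $\Theta$ is Schwartz, we have $\Theta_k\lesssim K_t^\lambda$ uniformly in the contributing pairs $(k,t)$.

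Integrating against $K_t^\lambda(x-y)\,\mathrm{d}y$ and using \eqref{eq:kernel-convolution} then yields
\[
\sum_\ell\int|G*\check\nu_{k,\ell}|^2K_t^\lambda(x-y)\,\mathrm{d}y\lesssim\int|G(y)|^2K_t^\lambda(x-y)\,\mathrm{d}y .
\]
It remains to remove the dyadic cutoff, i.e. to replace $G=F_t*\check\eta_k$ by $F_t$. The function $\check\eta_k$ is an $L^1$-normalized Schwartz function at scale $2^{-k}\lesssim a_\gamma(t)$. Cauchy--Schwarz with respect to the probability-like measure $|\check\eta_k|\,\mathrm{d}z/\|\check\eta_k\|_{L^1}$ gives
\[
|G|^2\le C|F_t|^2*|\check\eta_k|.
\]
We also have $|\check\eta_k|\lesssim K_t^\lambda$, since $2^{-k}\le a_\gamma(t)$ and the Schwartz decay dominates $(1+|z|/a_\gamma(t))^{-d\lambda}$ after normalization. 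A second application of \eqref{eq:kernel-convolution} therefore bounds the right-hand side by $C\int|F_t(y)|^2K_t^\lambda(x-y)\,\mathrm{d}y$. Summing over the $O(1)$ contributing $k$ gives the fixed-$t$ inequality.

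The main point to verify carefully is the uniform comparison of the two majorants with the robust kernel:
\[
\Theta_k\lesssim K_t^\lambda,\qquad |\check\eta_k|\lesssim K_t^\lambda .
\]
The first holds because $r_k a_\gamma(t)\sim1$ uniformly whenever $2^k\sim t^{-1}$, by the local stability of $\rho$. The second holds because the scale $2^{-k}\le a_\gamma(t)$ is finer than the aperture. The spatial scale of $\Theta_k$ must stay comparable to, and not much larger than, $a_\gamma(t)$; otherwise the domination by $K_t^\lambda$ would fail. Once these comparisons are checked, the rest is the routine Fubini argument above.
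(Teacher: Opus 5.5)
Your route is the same as the paper's: you freeze $t$, keep the $O(1)$ generations with $2^k\sim t^{-1}$, apply Lemma~\ref{lem:lattice-bessel} to $G=F_t*\check\eta_k$, remove $\check\eta_k$ by Cauchy--Schwarz, and absorb the kernels into $K_t^\lambda$. The comparison $\Theta_k\lesssim K_t^\lambda$ is fine because $r_k a_\gamma(t)\sim1$. That comparability follows directly from the formula for $\rho$, since $2^k$ and $t^{-1}$ differ by a bounded factor; Lemma~\ref{lem:rho-local-stability} covers only the much narrower window $|R'-R|\lesssim\rho(R)$.

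The other comparison, $|\check\eta_k|\lesssim K_t^\lambda$ uniformly, is false. An $L^1$-normalized kernel at the finer scale $2^{-k}\sim t$ is taller than one at the coarser scale $a_\gamma(t)$, not smaller. Indeed $\|\check\eta_k\|_{L^2}^2=\|\eta_k\|_{L^2}^2\sim2^{kd}$ and $\|\check\eta_k\|_{L^1}\lesssim1$, so $\|\check\eta_k\|_{L^\infty}\gtrsim2^{kd}\sim t^{-d}$. On the other hand, $\|K_t^\lambda\|_{L^\infty}=a_\gamma(t)^{-d}=t^{-d}(\log(1/t))^{-d(\gamma-1)}$. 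The ratio grows like $(\log(1/t))^{d(\gamma-1)}$, which is exactly the logarithmic aperture loss the paper is about. So ``finer than the aperture'' gives the opposite of a pointwise bound, and the step ``pointwise domination, then \eqref{eq:kernel-convolution}'' fails as written.

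What you need, and what the paper uses, is the convolution bound $|\check\eta_k|*K_t^\lambda\le CK_t^\lambda$. This is the stability of $K_t^\lambda$ under convolution with $L^1$-normalized kernels at scales $\lesssim a_\gamma(t)$. To prove it, split $\int|\check\eta_k(u)|K_t^\lambda(z-u)\,\mathrm{d}u$ into two regions.

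\begin{itemize}
\item On $|u|\le|z|/2$, one has $K_t^\lambda(z-u)\lesssim K_t^\lambda(z)$, and $\|\check\eta_k\|_{L^1}\lesssim1$ finishes this part.
\item On $|u|>|z|/2$, use $K_t^\lambda\le a_\gamma(t)^{-d}$ together with $\int_{|u|>|z|/2}|\check\eta_k|\lesssim_N(1+2^k|z|)^{-N}\lesssim(1+|z|/a_\gamma(t))^{-N}$.
\end{itemize}

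The last inequality is where $2^{-k}\lesssim a_\gamma(t)$ genuinely enters. Taking $N\ge d\lambda$ gives $K_t^\lambda(z)$. With your second comparison replaced by this estimate, the rest of your argument goes through as in the paper, including the $\beta$-independence of the constant.
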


\begin{proof}
As before, it suffices to argue first for Schwartz functions and finite partial sums. By definition and Fubini's theorem, the left-hand side is
\[
 \int_0^{t_0}\int_{\mathbb{R}^d}
 \sum_{k,\ell}|f*\phi_t*\check\eta_k*\check\nu_{k,\ell}(y)|^2
 K_t^\lambda(x-y)(\log(1/t))^{2\beta}\,\mathrm{d}y\frac{\mathrm{d}t}{t}.
\]
Only $k$ with $2^k\sim t^{-1}$ contribute. Applying Lemma \ref{lem:lattice-bessel} with
$F=f*\phi_t*\check\eta_k$ gives
\[
        \sum_\ell
        |f*\phi_t*\check\eta_k*\check\nu_{k,\ell}|^2
        \leq
        C |f*\phi_t*\check\eta_k|^2*\Theta_k.
\]
Since $\|\check\eta_k\|_{L^1}\lesssim1$, Cauchy's inequality gives
\[
        |f*\phi_t*\check\eta_k|^2
        \lesssim
        |f*\phi_t|^2*|\check\eta_k|.
\]
Consequently,
\[
        \sum_\ell
        |f*\phi_t*\check\eta_k*\check\nu_{k,\ell}|^2
        \leq
        C |f*\phi_t|^2*|\check\eta_k|*\Theta_k.
\]
The kernels $\Theta_k$ are nonnegative Schwartz majorants at spatial scale
$r_k^{-1}\sim a_\gamma(t)$, while $|\check\eta_k|$ is at spatial scale
$2^{-k}\sim t\leq a_\gamma(t)$. Since only $O(1)$ indices $k$ satisfy
$2^k\sim t^{-1}$, and since $K_t^\lambda$ is stable under convolution
with uniformly $L^1$-normalized Schwartz kernels at scales bounded by
$C a_\gamma(t)$, we have
\[
        \sum_{2^k\sim t^{-1}}
        |\check\eta_k|*\Theta_k*K_t^\lambda
        \leq
        C K_t^\lambda.
\]
Here the constant depends on $d,\gamma,\lambda$ and on the fixed cutoffs.
Substitution gives the desired estimate.
\end{proof}

%% file: 40_pointwise_BB.tex
\section{The pointwise square-function estimate}
\label{sec:pointwise-estimate}

\subsection{The local multiplier estimate}
\label{subsec:local-multiplier}

\begin{proposition}[Local logarithmic multiplier estimate]
\label{prop:local-multiplier-estimate}
Let $\sigma>d/2$, set $\lambda=2\sigma/d$, and assume that $m$
satisfies Hypothesis \ref{hyp:localized-log-miyachi}. Let $B$ be a
log-subdyadic ball, and let $\psi_B$ be a normalized frequency cutoff
adapted to $B$, with adaptation constants uniform in $B$. Then
\[
 g^{*}_{\log,\gamma,\beta,\lambda}\bigl(T_m(f*\check\psi_B)\bigr)(x)
 \leq C g^{*}_{\log,\gamma,0,\lambda}(f*\check\psi_B)(x).
\]
The constant depends on $d$, $\gamma$, $\beta$, $\sigma$, the
localized Miyachi constant of $m$, and the fixed cutoff constants, but is
uniform in $B$, $f$, and $x$.
\end{proposition}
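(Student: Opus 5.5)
The plan is to reduce the estimate to a pointwise kernel bound at each fixed scale $t$, in the style of the Beltran--Bennett one-ball argument. Write $F=f*\check\psi_B$. Since $\widehat F$ is supported in $C_1B$, only scales $t$ with $t^{-1}\sim R_B$ contribute: $\widehat{\phi_t}$ must meet $C_1B$, and by Lemma \ref{lem:ball-scale-stability} every point of $C_1B$ has modulus comparable to $R_B$. For these $t$ we also have
\[
\log(1/t)\sim\log R_B,\qquad a_\gamma(t)\sim\rho(R_B)^{-1}\sim r(B)^{-1},
\]
both by Lemma \ref{lem:rho-local-stability}. Next choose an adapted normalized bump $\Psi_B$ with $\Psi_B=1$ on $\operatorname{supp}\psi_B$. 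This is possible after enlarging $C_1$, and $C_1B$ is still comparable to a log-subdyadic ball. Then
\[
T_mF*\phi_t=(F*\phi_t)*\check{\mu}_B,\qquad \mu_B=m\Psi_B.
\]
Here $\phi_t$ and $T_m$ commute, and $F*\phi_t$ still has Fourier support in $\operatorname{supp}\psi_B$.

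The key step is a weighted Cauchy--Schwarz bound for the convolution with $\check\mu_B$. Set $a=a_\gamma(t)$. We write
\[
|G*\check\mu_B(y)|^2\le\Big(\int|G(y-z)|^2(1+|z|/a)^{-2\sigma}\,dz\Big)\Big(\int|\check\mu_B(z)|^2(1+|z|/a)^{2\sigma}\,dz\Big).
\]
The second factor is at most
\[
C\big(\|\mu_B\|_{L^2}^2+a^{-2\sigma}\|\mu_B\|_{\dot H^\sigma}^2\big)\lesssim C_m^2(\log R_B)^{-2\beta}|B|,
\]
by Hypothesis \ref{hyp:localized-log-miyachi} with $\theta=0$ and $\theta=\sigma$, using $a^{-1}\sim\rho(R_B)$. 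Since $|B|\sim r(B)^d\sim a^{-d}$, the first factor times $|B|$ is comparable to
\[
\int|G(y-z)|^2K_t^{\lambda}(z)\,dz,\qquad d\lambda=2\sigma .
\]
This is exactly where the choice $\lambda=2\sigma/d$ comes from, and $\lambda>1$ because $\sigma>d/2$. Hence
\[
|T_mF*\phi_t|^2\lesssim(\log R_B)^{-2\beta}\,|F*\phi_t|^2*K_t^\lambda .
\]

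Now integrate this bound against $K_t^\lambda(x-y)\,dy$ and use the convolution stability \eqref{eq:kernel-convolution}, which gives $K_t^\lambda*K_t^\lambda\le CK_t^\lambda$. Then multiply by $(\log(1/t))^{2\beta}$. On the relevant range of $t$ we have $(\log(1/t))^{2\beta}(\log R_B)^{-2\beta}\sim1$, uniformly and for either sign of $\beta$. Integrating in $dt/t$ over the $O(1)$-length range of $\log t$ then yields
\[
g^*_{\log,\gamma,\beta,\lambda}(T_mF)(x)^2\lesssim g^*_{\log,\gamma,0,\lambda}(F)(x)^2 .
\]

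I expect the main obstacle to be the Sobolev-weighted $L^2$ bound when $\sigma$ is not an integer. For that I would use Plancherel in the form $\int|\check\mu|^2|z|^{2\sigma}\,dz\sim\|\mu\|_{\dot H^\sigma}^2$, which holds for all real $\sigma\ge0$. A second point needing care is that the constants must stay uniform as $B$ varies; this requires checking that $C_1B$ remains log-subdyadic with fixed structural constants, via Lemma \ref{lem:ball-scale-stability}.
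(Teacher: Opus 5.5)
Your proposal is correct and follows essentially the same route as the paper. Both arguments localize with a bump equal to one on $\operatorname{supp}\psi_B$ and use the $\theta=0$ and $\theta=\sigma$ cases of the hypothesis to control the kernel in $L^2$ and in moment-weighted $L^2$. Both then bound $(\log(1/t))^{2\beta}|T_mF*\phi_t|^2$ pointwise by $|F*\phi_t|^2*K_t^\lambda$ and integrate against $K_t^\lambda(x-y)$ using the convolution stability. The only difference is cosmetic: you apply a single Cauchy--Schwarz with weight $(1+|z|/a_\gamma(t))^{2\sigma}$, whereas the paper splits the kernel into a near part $|z|\le a_\gamma(t)$ and a far part.
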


\begin{proof}
Choose a normalized bump $\varphi_B$ adapted to $B$, equal to one on
$\operatorname{supp}\psi_B$, supported in a fixed dilation of $B$, and
satisfying
\[
 |D^\nu\varphi_B(\xi)|\leq C_\nu r(B)^{-|\nu|}
\]
with constants uniform in $B$. Since $\varphi_B\equiv1$ on
$\operatorname{supp}\psi_B$, the localized multiplier relevant to
$f*\check\psi_B$ is
\[
 K_B=\mathcal{F}^{-1}(m\varphi_B),
\]
and
\[
        T_m(f*\check\psi_B)=K_B*(f*\check\psi_B).
\]

Hypothesis \ref{hyp:localized-log-miyachi} with $\theta=0$, together with Plancherel, gives
\[
 \|K_B\|_{L^2}\leq C(\log R_B)^{-\beta}r(B)^{d/2}.
\]
Hypothesis \ref{hyp:localized-log-miyachi} with $\theta=\sigma$ gives the corresponding weighted kernel estimate
\[
 \||z|^\sigma K_B(z)\|_{L^2_z}
 \leq C(\log R_B)^{-\beta}r(B)^{d/2-\sigma}.
\]
Here we use the Plancherel correspondence between Sobolev regularity in
frequency and spatial moments of the inverse Fourier transform:
\[
        \||z|^\sigma K_B\|_{L^2_z}
        \lesssim
        \|m\varphi_B\|_{\dot H^\sigma}.
\]
For integer $\sigma$ this follows by differentiating $m\varphi_B$ in
frequency. Fractional $\sigma$ follows from the usual fractional Sobolev
form of Plancherel.

Fix $t$ for which $(f*\check\psi_B)*\phi_t$ is nonzero. By the support of $\widehat{\phi_t}$ and the support of $\psi_B$, we have
\[
 t^{-1}\sim R_B,
 \qquad
 r(B)\sim \rho(t^{-1})=a_\gamma(t)^{-1},
 \qquad
 \log R_B\sim \log(1/t).
\]
Set
\[
 F_t=(f*\check\psi_B)*\phi_t.
\]
Split
\[
 K_B*F_t(y)=I_{\mathrm{near}}(y)+I_{\mathrm{far}}(y),
\]
where
\[
 I_{\mathrm{near}}(y)=\int_{|z|\leq a_\gamma(t)}K_B(z)F_t(y-z)\,\mathrm{d}z
\]
and
\[
 I_{\mathrm{far}}(y)=\int_{|z|>a_\gamma(t)}K_B(z)F_t(y-z)\,\mathrm{d}z.
\]
For the near term, Cauchy's inequality and the $L^2$ bound for $K_B$ give
\[
 |I_{\mathrm{near}}(y)|^2
 \leq C(\log R_B)^{-2\beta}r(B)^d
 \int_{|z|\leq a_\gamma(t)}|F_t(y-z)|^2\,\mathrm{d}z.
\]
Since $r(B)^d\sim a_\gamma(t)^{-d}$, and $K_t^\lambda(z)\sim a_\gamma(t)^{-d}$ for $|z|\leq a_\gamma(t)$, this yields
\[
 |I_{\mathrm{near}}(y)|^2
 \leq C(\log R_B)^{-2\beta}
 \int_{\mathbb{R}^d}|F_t(y-z)|^2K_t^\lambda(z)\,\mathrm{d}z.
\]
For the far term, Cauchy's inequality with weight $|z|^\sigma$ gives
\[
 |I_{\mathrm{far}}(y)|^2
 \leq \||z|^\sigma K_B\|_{L^2}^2
 \int_{|z|>a_\gamma(t)}|z|^{-2\sigma}|F_t(y-z)|^2\,\mathrm{d}z.
\]
Using the weighted kernel estimate and $r(B)\sim a_\gamma(t)^{-1}$, the right-hand side is bounded by
\[
 C(\log R_B)^{-2\beta}
 \int_{|z|>a_\gamma(t)}
 |F_t(y-z)|^2 a_\gamma(t)^{-d}
 \left(\frac{|z|}{a_\gamma(t)}\right)^{-2\sigma}
 \mathrm{d}z.
\]
Since $2\sigma=d\lambda$, this is bounded by
\[
 C(\log R_B)^{-2\beta}
 \int_{\mathbb{R}^d}|F_t(y-z)|^2K_t^\lambda(z)\,\mathrm{d}z.
\]
Combining the near and far estimates, and multiplying by the output weight $(\log(1/t))^{2\beta}$, gives
\[
 (\log(1/t))^{2\beta}|K_B*F_t(y)|^2
 \leq C\int_{\mathbb{R}^d}|F_t(y-z)|^2K_t^\lambda(z)\,\mathrm{d}z.
\]
Integrating against $K_t^\lambda(x-y)\,\mathrm{d}y$, using \eqref{eq:kernel-convolution}, and then integrating in $t$, gives the result.
\end{proof}

\subsection{Assembly of the pointwise estimate}
\label{subsec:pointwise-assembly}

\begin{theorem}[Logarithmic Beltran--Bennett pointwise estimate]
\label{thm:log-bb-pointwise}
Let $\sigma>d/2$, $\lambda=2\sigma/d$, and assume that $m$ satisfies Hypothesis \ref{hyp:localized-log-miyachi}. Then
\[
 g_{\log,\gamma,\beta}(T_m f)(x)
 \leq C g^{*}_{\log,\gamma,0,\lambda}(f)(x).
\]
The constant depends on $d$, $\gamma$, $\beta$, $\sigma$, $\lambda$, the localized Miyachi constant of $m$, and the fixed cutoffs, but not on $f$ or $x$. Since the square functions only involve $0<t<t_0$, the estimate depends
only on the high-frequency portion of $m$ detected by
$\widehat{\phi_t}$.
\end{theorem}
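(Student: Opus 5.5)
The plan is to chain the three propositions of Sections \ref{sec:log-decomposition} and \ref{sec:pointwise-estimate}: decouple, apply the one-ball estimate, then recouple. First reduce to Schwartz $f$ and finite partial sums of the lattice partition. Since $g_{\log,\gamma,\beta}(T_mf)$ involves only $f*\phi_t$ with $0<t<t_0$, only frequencies $|\xi|\geq 2^{k_0+1}$ are seen. On that region $\sum_{k,\ell}\psi_{k,\ell}=1$. Hence inside the square function we may write
\[
 T_mf*\phi_t=\sum_{k,\ell}T_m(f*\check\psi_{k,\ell})*\phi_t,
\]
and the low-frequency part of $m$ plays no role.

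Next, set $f_{k,\ell}=T_m(f*\check\psi_{k,\ell})$. Its Fourier support lies in $\operatorname{supp}\psi_{k,\ell}$, so Proposition \ref{prop:log-decoupling} gives
\[
 g_{\log,\gamma,\beta}(T_mf)(x)^2\leq C\sum_{k,\ell}g^*_{\log,\gamma,\beta,\lambda}\bigl(T_m(f*\check\psi_{k,\ell})\bigr)(x)^2 .
\]
Here $\lambda=2\sigma/d>1$ because $\sigma>d/2$. Each $\psi_{k,\ell}$ is supported in a ball $B_{k,\ell}$ of radius comparable to $r_k=\rho(2^k)$. I need to check that $B_{k,\ell}$ is log-subdyadic in the sense of Definition \ref{def:log-subdyadic-ball}. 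Its distance to the origin is $R_B\sim 2^k$, so Lemma \ref{lem:rho-local-stability} gives $\rho(R_B)\sim\rho(2^k)$, with uniform constants once $k_0$ is large. I also need $\psi_{k,\ell}=\eta_k\nu_{k,\ell}$ to be a normalized cutoff adapted to $B_{k,\ell}$ uniformly. This holds because derivatives of $\eta_k$ cost $2^{-k}\leq r_k^{-1}$, and derivatives of $\nu_{k,\ell}$ cost $r_k^{-1}$. Proposition \ref{prop:local-multiplier-estimate} then bounds each term by $C\,g^*_{\log,\gamma,0,\lambda}(f*\check\psi_{k,\ell})(x)^2$, with $C$ uniform in $(k,\ell)$. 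Finally, Proposition \ref{prop:log-recoupling} with $\beta=0$ sums these to $C\,g^*_{\log,\gamma,0,\lambda}(f)(x)^2$. Taking square roots gives the theorem.

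The main obstacle is the uniformity bookkeeping rather than any new estimate. Decoupling is stated with weight $\beta$ on both sides, the local estimate converts weight $\beta$ to weight $0$, and recoupling must then be applied at weight $0$. I would check that the constants in Propositions \ref{prop:log-decoupling} and \ref{prop:log-recoupling} do not depend on $\beta$ in a way that breaks this. The recoupling constant is stated to depend only on $d,\gamma,\lambda$, so this is fine. I would also verify that the balls near the inner edge $|\xi|\sim 2^{k_0}$ still meet $R_B\geq R_0$. This is ensured by $2^{k_0}\geq R_0$ together with the choice $k_1\geq k_0+2$, so that $\widehat{\phi_t}$ never sees the pieces with $R_B<R_0$. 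If needed, I would drop those pieces explicitly, since they contribute zero to the square functions.
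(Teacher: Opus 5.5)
Your proposal is correct and follows the paper's proof: decouple with Proposition~\ref{prop:log-decoupling} applied to the pieces $T_m(f*\check\psi_{k,\ell})$, bound each piece with Proposition~\ref{prop:local-multiplier-estimate}, then recouple with Proposition~\ref{prop:log-recoupling} at $\beta=0$; your extra checks fill in details the paper leaves implicit, namely that each $\psi_{k,\ell}$ is a uniformly adapted bump on a log-subdyadic ball and that pieces near the inner edge are invisible to $\widehat{\phi_t}$. One slip is cosmetic: $\rho(R_B)\sim\rho(2^k)$ follows directly from $R_B\sim2^k$ and $\log R_B\sim k$, not from Lemma~\ref{lem:rho-local-stability}, whose hypothesis $|R_B-2^k|\leq A\rho(2^k)$ generally fails for lattice balls lying well inside the dyadic annulus.
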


\begin{proof}
Apply Proposition \ref{prop:log-decoupling} to $T_m f$, using the logarithmic partition of unity on the high-frequency region detected by $\phi_t$. This gives
\[
 g_{\log,\gamma,\beta}(T_m f)(x)^2
 \leq C\sum_B g^{*}_{\log,\gamma,\beta,\lambda}\bigl(T_m(f*\check\psi_B)\bigr)(x)^2.
\]
Proposition \ref{prop:local-multiplier-estimate} bounds the right-hand side by
\[
 C\sum_B g^{*}_{\log,\gamma,0,\lambda}(f*\check\psi_B)(x)^2.
\]
Finally Proposition \ref{prop:log-recoupling}, with $\beta=0$, gives
\[
 \sum_B g^{*}_{\log,\gamma,0,\lambda}(f*\check\psi_B)(x)^2
 \leq C g^{*}_{\log,\gamma,0,\lambda}(f)(x)^2.
\]
Taking square roots completes the proof.
\end{proof}

%% file: 50_weighted_BB.tex
\section{Weighted estimates}
\label{sec:weighted}

We now record the weighted estimates needed to pass from the pointwise
square-function estimate of Theorem \ref{thm:log-bb-pointwise} to weighted
$L^2$ bounds. The forward estimate for the robust square function is
standard. The reverse estimate is the logarithmic analogue of the
Beltran--Bennett reverse square-function inequality
\cite{BeltranBennett2017Subdyadic}.

Choose a radial function $\kappa\in\mathcal{S}(\mathbb{R}^d)$ such that
$\widehat\kappa=1$ on a neighborhood of
$\operatorname{supp}\widehat\phi$. Throughout this section we fix a
nonnegative radial function $\eta\in \mathcal{S}(\mathbb{R}^d)$ such that
\[
        \int_{\mathbb{R}^d}\eta(x)\,\mathrm{d}x=1,
        \qquad
        \eta(x)\geq c\,\mathbf{1}_{B(0,1)}(x),
        \qquad
        |\kappa(x)|\leq C\eta(x).
\]
Such a positive radial Schwartz majorant can be fixed once and for all. We set
\[
        \eta_t(x)=t^{-d}\eta(x/t).
\]
For a locally integrable function $w$, define the smooth averaging
operator
\[
        A_t w(y)
        =
        \eta_t*w(y).
\]

\begin{definition}[Logarithmic geometric maximal operator]
\label{def:log-maximal-candidate}
Let
\[
        a_\gamma(t)=t(\log(1/t))^{\gamma-1}.
\]
For a nonnegative locally integrable function $w$, define
\[
        M_{\log,\gamma,\beta}w(x)
        =
        \sup_{\substack{0<t<t_0\\ |y-x|\leq a_\gamma(t)}}
        (\log(1/t))^{-2\beta}A_t w(y).
\]
The operator is local in scale, consistently with the high-frequency nature
of the logarithmic square functions. The displacement of the center is
measured at the log-subdyadic aperture $a_\gamma(t)$, while the averaging
takes place at the underlying Littlewood--Paley scale $t$.
\end{definition}

\begin{remark}[Normalization]
\label{rem:maximal-normalization}
The logarithmic square function $g_{\log,\gamma,\beta}$ contains the
factor
\[
        (\log(1/t))^{2\beta}.
\]
The corresponding weighted maximal operator contains the inverse factor
\[
        (\log(1/t))^{-2\beta}.
\]
This is the logarithmic counterpart of the Beltran--Bennett power
normalization: $g_{\alpha,\beta}$ carries the factor $t^{-2\beta}$,
whereas the associated maximal operator carries the compensating factor
$t^{2\beta}$.
\end{remark}

\begin{remark}[Smooth and hard averages]
\label{rem:smooth-hard-averages}
Since $\eta\geq c\,\mathbf{1}_{B(0,1)}$, the smooth average $A_tw(y)$
dominates the normalized average over $B(y,t)$, up to a dimensional
constant. Thus the smooth maximal operator in Definition
\ref{def:log-maximal-candidate} is stronger than the corresponding
hard-ball version. The smooth formulation is convenient because it is
stable under the Littlewood--Paley smoothing step used below.
\end{remark}

\begin{proposition}[Forward weighted estimate for $g^*_{\log,\gamma,0,\lambda}$]
\label{prop:forward-log-g-star}
Let $\gamma>1$ and $\lambda>1$. Then, for every nonnegative locally
integrable weight $w$,
\[
        \int_{\mathbb{R}^d}
        g^*_{\log,\gamma,0,\lambda}(f)(x)^2w(x)\,\mathrm{d}x
        \lesssim
        \int_{\mathbb{R}^d}
        |f(x)|^2M^2w(x)\,\mathrm{d}x .
\]
The implicit constant depends only on $d,\gamma,\lambda$, and on the
fixed Littlewood--Paley cutoff.
\end{proposition}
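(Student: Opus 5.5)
Expanding the square and using Fubini, the left-hand side equals
\[
\int_0^{t_0}\int_{\mathbb{R}^d}|f*\phi_t(y)|^2\,(K_t^\lambda*w)(y)\,\mathrm{d}y\,\frac{\mathrm{d}t}{t}.
\]
The plan is to reduce this to a classical weighted Littlewood--Paley estimate of Wilson type. The first step is the pointwise bound
\[
K_t^\lambda*w(y)\lesssim Mw(y),
\]
uniformly in $t$. This holds because $K_t^\lambda$ is a radially decreasing integrable kernel whose $L^1$ norm is bounded uniformly in $t$ when $\lambda>1$. After this step, the aperture $a_\gamma(t)$, and with it all of the logarithmic geometry, drops out. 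The quantity to estimate becomes
\[
\int_{\mathbb{R}^d}\left(\int_0^{t_0}|f*\phi_t(y)|^2\,\frac{\mathrm{d}t}{t}\right)Mw(y)\,\mathrm{d}y,
\]
which is bounded by $\int g(f)^2\,Mw$, where $g$ is the standard vertical Littlewood--Paley $g$-function built from $\phi$.

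The second step is the weighted inequality
\[
\int g(f)^2\,v\lesssim\int|f|^2\,Mv,
\]
valid for every nonnegative $v$. Applying it with $v=Mw$ gives the bound $\int|f|^2M^2w$, as claimed. This inequality is the classical Chang--Wilson--Wolff / Wilson estimate for vertical square functions with Schwartz kernels having mean zero, which the paper already cites \cite{Wilson1989WeightedSquareFunction}.

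If I must make it self-contained, I would proceed as follows. First, the Fefferman--Stein vector-valued duality $\int|Tf|^2v\lesssim\int|f|^2Mv$ holds for a Calderón--Zygmund operator $T$ with values in $L^2(\mathrm{d}t/t)$; here $T$ is the map $f\mapsto(f*\phi_t)_t$, whose kernel satisfies standard size and Hörmander regularity bounds. Second, I would prove this inequality by the usual route. Take $p=2$ and bound $\int|Tf|^2v$ by
\[
\int |f|^2\,M^\sharp\text{-type controls};
\]
more concretely, use the sharp-function estimate $M^\sharp(|Tf|_{L^2(\mathrm{d}t/t)})\lesssim M_s f$ for $s>1$, together with the Fefferman--Stein inequality with weight $v$. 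Alternatively, decompose $w$ in $\mathbb{R}^d$ into the parts near and far from a ball and apply Hörmander's condition.

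The main obstacle is this weighted $L^2$ bound with the single maximal function $Mv$, rather than $M^{k}v$ for larger $k$ or an $L\log L$ bump. Naive CZ arguments lose an extra maximal function. Losing one would still give an estimate, but with $M^3w$, which is worse than stated. So I would invoke Wilson's theorem directly, noting that truncation to $t<t_0$ only helps.
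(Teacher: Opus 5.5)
Your proposal matches the paper's proof: Fubini, the uniform bound $K_t^\lambda*w\lesssim Mw$ (from $\lambda>1$), discarding the truncation $t<t_0$, and Wilson's weighted inequality $\int g(f)^2v\lesssim\int|f|^2Mv$ applied with $v=Mw$. The self-contained detour is unnecessary, and its opening claim is false in that generality: the single-$M$ bound is special to square functions (Chang--Wilson--Wolff, Wilson) and fails for general Hilbert-space-valued Calder\'on--Zygmund operators such as the Hilbert transform, so citing Wilson directly, as you ultimately do, is the right step.
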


\begin{proof}
By the definition of $g^*_{\log,\gamma,0,\lambda}$ and Fubini's theorem,
\[
\begin{aligned}
        \int_{\mathbb{R}^d}
        g^*_{\log,\gamma,0,\lambda}(f)(x)^2w(x)\,\mathrm{d}x
        &=
        \int_0^{t_0}
        \int_{\mathbb{R}^d}
        |f*\phi_t(y)|^2
        (K_t^\lambda*w)(y)
        \,\mathrm{d}y\,\frac{\mathrm{d}t}{t}.
\end{aligned}
\]
Since $\lambda>1$, the kernels $K_t^\lambda$ are uniformly integrable
and are dominated, after dyadic annular decomposition, by averages over
balls centered at $y$. Hence
\[
        K_t^\lambda*w(y)\lesssim Mw(y)
\]
uniformly in $0<t<t_0$. Therefore
\[
        \int_{\mathbb{R}^d}
        g^*_{\log,\gamma,0,\lambda}(f)(x)^2w(x)\,\mathrm{d}x
        \lesssim
        \int_0^{t_0}
        \int_{\mathbb{R}^d}
        |f*\phi_t(y)|^2Mw(y)
        \,\mathrm{d}y\,\frac{\mathrm{d}t}{t}.
\]
The classical weighted Littlewood--Paley estimate gives
\[
        \int_{\mathbb{R}^d}
        \int_0^\infty
        |f*\phi_t(y)|^2
        \frac{\mathrm{d}t}{t}
        W(y)\,\mathrm{d}y
        \lesssim
        \int_{\mathbb{R}^d}
        |f(y)|^2MW(y)\,\mathrm{d}y
\]
for every nonnegative locally integrable weight $W$; see
\cite{Wilson1989WeightedSquareFunction} and also
\cite[Proposition 15]{BeltranBennett2017Subdyadic}. Applying this with
$W=Mw$, and
discarding the harmless restriction $0<t<t_0$, yields
\[
        \int_{\mathbb{R}^d}
        g^*_{\log,\gamma,0,\lambda}(f)(x)^2w(x)\,\mathrm{d}x
        \lesssim
        \int_{\mathbb{R}^d}
        |f(y)|^2M^2w(y)\,\mathrm{d}y .
\]
\end{proof}

\begin{lemma}[Littlewood--Paley smoothing of the weight]
\label{lem:lp-scale-averaging}
Let $\phi\in\mathcal{S}(\mathbb{R}^d)$ be fixed with
\[
        \operatorname{supp}\widehat{\phi}
        \subset
        \{\xi:1/2\leq |\xi|\leq 2\}.
\]
Then, for every nonnegative locally integrable weight $w$, every
Schwartz function $f$, and every $t>0$,
\[
        \int_{\mathbb{R}^d}
        |f*\phi_t(y)|^2M^3w(y)\,\mathrm{d}y
        \lesssim
        \int_{\mathbb{R}^d}
        |f*\phi_t(y)|^2A_tM^4w(y)\,\mathrm{d}y .
\]
The implicit constant depends only on $d$ and on the fixed functions
$\phi$, $\kappa$, and $\eta$.
\end{lemma}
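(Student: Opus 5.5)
The plan is to exploit two facts. The function $F=f*\phi_t$ is frequency localized at scale $t^{-1}$. The weight $M^3w$ is, up to a loss of one maximal function, comparable to a smooth average at scale $t$ of $M^4w$. Concretely, I will aim for the pointwise lower bound
\[
        M^3w(y)\lesssim \text{(something controlled by)}\ A_tM^4w(y),
\]
but that is false in general, since $M^3w$ can be larger at $y$ than its average. So I will instead use the frequency localization to move the weight. Write $F=F*\kappa_t$, which holds because $\widehat\kappa=1$ near $\operatorname{supp}\widehat\phi$. Cauchy--Schwarz with $\|\kappa_t\|_{L^1}\lesssim1$ and $|\kappa|\le C\eta$ then gives
\[
        |F(y)|^2\lesssim |F|^2*|\kappa_t|(y)\lesssim |F|^2*\eta_t(y).
\]
Integrating against $M^3w$ and using Fubini with radial $\eta$ yields
\[
        \int|F|^2M^3w\lesssim\int|F(u)|^2\,(\eta_t*M^3w)(u)\,\mathrm{d}u
        =\int|F|^2A_tM^3w.
\]
This already gives the statement with $M^3w$ in place of $M^4w$. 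Since $M^3w\le M^4w$ pointwise (because $g\le Mg$ a.e.\ for locally integrable $g$, by Lebesgue differentiation), and $A_t$ is positivity preserving, I obtain $A_tM^3w\le A_tM^4w$, which closes the estimate.

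The key steps, in order, are the following.
\begin{enumerate}
\item Establish the reproducing identity $f*\phi_t=f*\phi_t*\kappa_t$ from the support condition on $\widehat\phi$ and the choice of $\kappa$.
\item Prove the Cauchy--Schwarz bound $|F*\kappa_t|^2\le\|\kappa\|_{L^1}\,|F|^2*|\kappa_t|$.
\item Apply the domination $|\kappa|\le C\eta$.
\item Use Fubini and the symmetry of $\eta$ to transfer the convolution onto the weight.
\item Apply the monotonicity $M^3w\le M^4w$.
\end{enumerate}
The only point needing care is that $M^3w$ may be identically infinite or fail to be locally integrable. In that case I will argue that if $M^3w\equiv\infty$ the right side is also infinite, since $M^4w\ge M^3w$, and the inequality is trivial. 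Otherwise $M^3w$ is finite a.e.\ and locally integrable enough for Fubini with nonnegative integrands, so Tonelli applies without integrability hypotheses.

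I expect the main obstacle to be justifying the a.e.\ inequality $g\le Mg$ when $g=M^3w$ need not be locally integrable. To handle it, I would first try Tonelli throughout, since every integrand is nonnegative. The monotonicity $g\le Mg$ holds at every Lebesgue point, which includes a.e.\ point when $g\in L^1_{\mathrm{loc}}$. When $g\notin L^1_{\mathrm{loc}}$, $Mg\equiv\infty$ and the bound is trivial. If this step proves awkward, the slack of one extra maximal function in the statement is presumably there so the bound can be routed through a Fefferman--Stein-type step instead. In that route I would write $\eta_t*M^3w\lesssim M^4w$ and keep the extra averaging as the intermediate form.
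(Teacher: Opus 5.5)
Your proposal is correct and follows the paper's proof essentially step for step: the reproducing identity $F_t=F_t*\kappa_t$, Cauchy's inequality combined with $|\kappa|\le C\eta$, Fubini with the radial $\eta$ to move $\eta_t$ onto $M^3w$, and finally the a.e.\ monotonicity $M^3w\le M^4w$. The paper states that last step without comment. Your justification covers it: $g\le Mg$ a.e.\ holds for every nonnegative measurable $g$, by Lebesgue differentiation when $g\in L^1_{\mathrm{loc}}$ and trivially otherwise, since then $Mg\equiv\infty$.
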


\begin{proof}
Let
\[
        F_t=f*\phi_t.
\]
Since $\widehat\kappa=1$ on a neighborhood of
$\operatorname{supp}\widehat\phi$, we have
\[
        F_t=F_t*\kappa_t.
\]
Cauchy's inequality and $|\kappa|\leq C\eta$ therefore give
\[
\begin{aligned}
        |F_t(y)|^2
        &\leq
        \|\kappa\|_{L^1}
        \int_{\mathbb{R}^d}
        |F_t(y-z)|^2|\kappa_t(z)|\,\mathrm{d}z \\
        &\lesssim
        (|F_t|^2*\eta_t)(y).
\end{aligned}
\]
Hence
\[
\begin{aligned}
        \int_{\mathbb{R}^d}|F_t(y)|^2M^3w(y)\,\mathrm{d}y
        &\lesssim
        \int_{\mathbb{R}^d}
        (|F_t|^2*\eta_t)(y)M^3w(y)\,\mathrm{d}y  \\
        &=
        \int_{\mathbb{R}^d}
        |F_t(z)|^2(\eta_t*M^3w)(z)\,\mathrm{d}z .
\end{aligned}
\]
Since the Hardy--Littlewood maximal operator dominates its input almost
everywhere,
\[
        M^3w \leq M^4w
        \qquad\text{a.e.}
\]
Therefore
\[
        \eta_t*M^3w(z)
        \leq
        \eta_t*M^4w(z)
        =
        A_tM^4w(z).
\]
Combining the preceding inequalities gives the claim.
\end{proof}

\begin{lemma}[Geometric averaging over logarithmic approach regions]
\label{lem:log-geometric-averaging}
For every nonnegative measurable function $F$, every nonnegative locally
integrable weight $W$, and every $0<t<t_0$,
\[
        \int_{\mathbb{R}^d}F(y)A_tW(y)\,\mathrm{d}y
        \lesssim
        \int_{\mathbb{R}^d}
        \int_{|y-x|\leq a_\gamma(t)}
        F(y)\frac{\mathrm{d}y}{a_\gamma(t)^d}
        \sup_{|z-x|\leq a_\gamma(t)}A_tW(z)
        \,\mathrm{d}x .
\]
The implicit constant depends only on $d$.
\end{lemma}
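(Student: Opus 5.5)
The plan is a Fubini averaging argument: write $F(y)$ as an average of itself over the ball of radius $a_\gamma(t)$ centred at $y$, then swap the order of integration. Write $a=a_\gamma(t)$. For each $y$, the ball $\{x:|x-y|\leq a\}$ has measure $|B(0,1)|\,a^d$, so
\[
 F(y)A_tW(y)=\frac{1}{|B(0,1)|}\int_{|x-y|\leq a}F(y)A_tW(y)\,\frac{\mathrm{d}x}{a^d}.
\]
We integrate this identity in $y$. Since all quantities are nonnegative and measurable, Tonelli's theorem lets us exchange the $x$ and $y$ integrals. This gives
\[
 \int_{\mathbb{R}^d}F(y)A_tW(y)\,\mathrm{d}y
 =\frac{1}{|B(0,1)|}\int_{\mathbb{R}^d}\int_{|y-x|\leq a}F(y)A_tW(y)\,\frac{\mathrm{d}y}{a^d}\,\mathrm{d}x.
\]

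Next we bound the weight inside the inner integral. For fixed $x$ and every $y$ with $|y-x|\leq a$, we have trivially
\[
 A_tW(y)\leq\sup_{|z-x|\leq a}A_tW(z).
\]
This supremum does not depend on $y$, so it can be pulled out of the inner integral. The result is the claimed inequality with constant $|B(0,1)|^{-1}$, which depends only on $d$.

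The only point needing care is measurability of the function $x\mapsto\sup_{|z-x|\leq a}A_tW(z)$, which is needed for the outer integral to make sense. I would handle this as follows. If $A_tW$ is finite somewhere, then it is continuous, because $W$ is locally integrable and $\eta_t$ is a Schwartz function. When $A_tW$ is continuous, the supremum over the closed ball equals the supremum over a countable dense subset, so it is measurable; it is in fact lower semicontinuous. If $A_tW\equiv\infty$, the right-hand side is $+\infty$ and the inequality is trivial. I expect no step here to be a real obstacle: the lemma is a purely geometric averaging identity, and the aperture $a_\gamma(t)$ enters only through the normalization $a^{-d}$.
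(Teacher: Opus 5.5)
Your argument is correct and is essentially the paper's proof. The paper first bounds $A_tW(y)$ by $\sup_{|z-x|\leq a_\gamma(t)}A_tW(z)$ whenever $|x-y|\leq a_\gamma(t)$, then writes $F(y)$ as an average over $x\in B(y,a_\gamma(t))$ and applies Fubini. That is your computation with the two steps in the opposite order.

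One correction concerns your measurability paragraph. Finiteness of $A_tW$ at a single point does not imply continuity merely because $W\in L^1_{\mathrm{loc}}$ and $\eta$ is Schwartz. For a Gaussian kernel and $W(z)=e^{|z|^2/t^2}(1+|z|)^{-d-1}$, one has $A_tW(0)<\infty$ but $A_tW(y)=\infty$ for every $y\neq0$.

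You do not need continuity. By Fatou's lemma, $A_tW$ is lower semicontinuous. Hence $x\mapsto\sup_{|h|\leq a_\gamma(t)}A_tW(x+h)$ is a supremum of lower semicontinuous functions, so it is itself lower semicontinuous and therefore Borel measurable, with no case distinction required.
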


\begin{proof}
For each fixed $y$,
\[
        A_tW(y)
        \leq
        \sup_{|z-x|\leq a_\gamma(t)}A_tW(z)
\]
whenever $|x-y|\leq a_\gamma(t)$. Hence
\[
        F(y)A_tW(y)
        \lesssim
        F(y)
        \frac{1}{a_\gamma(t)^d}
        \int_{|x-y|\leq a_\gamma(t)}
        \sup_{|z-x|\leq a_\gamma(t)}A_tW(z)
        \,\mathrm{d}x .
\]
Integrating in $y$ and applying Fubini gives the claim.
\end{proof}

\begin{proposition}[Reverse weighted estimate for $g_{\log,\gamma,\beta}$]
\label{prop:reverse-log-g}
Let $\gamma>1$, let $\beta\geq0$, and let $P_{\mathrm{hi}}$ be the
truncated Calder\'on reproducing operator defined in Subsection
\ref{subsec:square-functions}. Then, for every nonnegative locally
integrable weight $w$,
\[
        \int_{\mathbb{R}^d}|P_{\mathrm{hi}}f(x)|^2w(x)\,\mathrm{d}x
        \lesssim
        \int_{\mathbb{R}^d}
        g_{\log,\gamma,\beta}(f)(x)^2
        M_{\log,\gamma,\beta}M^4w(x)\,\mathrm{d}x .
\]
The implicit constant depends only on $d$, $\gamma$, $\beta$, $t_0$, and
on the fixed functions $\phi$, $\kappa$, and $\eta$.
\end{proposition}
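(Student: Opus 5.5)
We follow the Beltran--Bennett reverse square-function scheme: duality, then Calder\'on reproduction, then the two preceding lemmas. By duality it suffices to bound $|\langle P_{\mathrm{hi}}f, hw\rangle|$ for $h$ with $\int|h|^2w\le1$. Write
\[
P_{\mathrm{hi}}f=\int_0^{t_0}f*\phi_t*\kappa_t\,\frac{\mathrm{d}t}{t},
\]
which is valid since $\widehat\kappa=1$ on $\operatorname{supp}\widehat\phi$. Moving $\kappa_t$ (or $\widetilde\phi_t$) onto $hw$ gives
\[
\langle P_{\mathrm{hi}}f,hw\rangle=\int_0^{t_0}\int f*\phi_t(y)\,\overline{(hw)*\widetilde\kappa_t(y)}\,\mathrm{d}y\,\frac{\mathrm{d}t}{t}.
\]
We then apply Cauchy--Schwarz with the weight $V_t(y)=(\log(1/t))^{-2\beta}A_tM^4w(y)$. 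The first factor is
\[
\int_0^{t_0}\int|f*\phi_t|^2 V_t\,\mathrm{d}y\,\frac{\mathrm{d}t}{t},
\]
and the second is
\[
\int_0^{t_0}\int|(hw)*\widetilde\kappa_t|^2V_t^{-1}\,\mathrm{d}y\,\frac{\mathrm{d}t}{t}.
\]

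For the first factor we use Lemma \ref{lem:log-geometric-averaging} with $F=|f*\phi_t|^2$ and $W=M^4w$. It bounds $\int |f*\phi_t|^2A_tM^4w$ by
\[
\int_{\mathbb{R}^d}\Bigl(a_\gamma(t)^{-d}\int_{|y-x|\le a_\gamma(t)}|f*\phi_t|^2\,\mathrm{d}y\Bigr)\sup_{|z-x|\le a_\gamma(t)}A_tM^4w(z)\,\mathrm{d}x.
\]
Writing $(\log(1/t))^{-2\beta}\sup A_tM^4w\le M_{\log,\gamma,\beta}M^4w(x)\,(\log(1/t))^{0}$ is not quite right, since we need the factor $(\log(1/t))^{2\beta}$ to sit with $g$. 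So we instead multiply and divide. We use
\[
\sup_{|z-x|\le a_\gamma(t)}A_tM^4w(z)\le(\log(1/t))^{2\beta}M_{\log,\gamma,\beta}M^4w(x),
\]
so that the first factor, with the weight $A_tM^4w$ without the log power, integrates in $t$ to at most $\int g_{\log,\gamma,\beta}(f)^2M_{\log,\gamma,\beta}M^4w$. Consequently the correct Cauchy--Schwarz weight is $V_t=A_tM^4w$ itself, with no logarithmic factor.

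The main obstacle is the second factor. We must show
\[
\int_0^{t_0}\int|(hw)*\widetilde\kappa_t|^2(A_tM^4w)^{-1}\,\mathrm{d}y\,\frac{\mathrm{d}t}{t}\lesssim\int|h|^2w.
\]
This is a weighted Littlewood--Paley bound with a dual-type weight. Here we need a cancellative kernel, so we should put $\phi_t$ rather than $\kappa_t$ on the $hw$ side, using $\phi=\phi*\kappa$, and keep $\kappa_t$ on $f*\phi_t$, absorbing it through the pointwise inequality $|F_t*\kappa_t|^2\lesssim|F_t|^2*\eta_t$ exactly as in Lemma \ref{lem:lp-scale-averaging}. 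For the dual factor we plan to invoke Wilson's estimate in the form
\[
\int\!\!\int_0^\infty|G*\phi_t|^2\frac{\mathrm{d}t}{t}\,\frac{\mathrm{d}y}{U}\lesssim\int\frac{|G|^2}{U}\quad\text{for }U\in A_1\text{-like weights},
\]
as in \cite[Proposition 15]{BeltranBennett2017Subdyadic}. We apply it with $G=hw$. The weight $A_tM^4w$ is replaced from below by $M^3w$, using Lemma \ref{lem:lp-scale-averaging} in reverse spirit: $A_tM^4w\gtrsim$ an $A_1$ weight comparable to $M^3w$ up to smoothing at scale $t$. Then $\int|hw|^2/M^3w\le\int|h|^2w$ because $w\le M^3w$ almost everywhere. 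We expect the delicate point to be precisely matching the smoothing scale $t$ in $A_t$ against the Wilson weight. The extra powers of $M$ (from $M^2$ up to $M^4$) are budgeted for exactly this: one $M$ makes the weight $A_1$, and the others absorb the $\eta_t$ averaging. The restriction $\beta\ge0$ and the truncation $t<t_0$ enter only through the definition of $M_{\log,\gamma,\beta}$, which is why the constant depends on $t_0$.
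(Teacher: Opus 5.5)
Your treatment of the first factor is correct and matches the paper: Lemma~\ref{lem:log-geometric-averaging} with $F=|f*\phi_t|^2$, $W=M^4w$, followed by $\sup_{|z-x|\le a_\gamma(t)}A_tM^4w(z)\le(\log(1/t))^{2\beta}M_{\log,\gamma,\beta}M^4w(x)$. The gap is the second factor,
\[
\int_0^{t_0}\int_{\mathbb{R}^d}|(hw)*\widetilde\kappa_t(y)|^2\bigl(A_tM^4w(y)\bigr)^{-1}\,\mathrm{d}y\,\frac{\mathrm{d}t}{t}\lesssim\int_{\mathbb{R}^d}|h|^2w .
\]
By duality this is equivalent to $\int|\int_0^{t_0}F_t*\kappa_t\,\frac{\mathrm{d}t}{t}|^2w\lesssim\int_0^{t_0}\int|F_t|^2A_tM^4w\,\mathrm{d}y\,\frac{\mathrm{d}t}{t}$ for arbitrary families $(F_t)$. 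It therefore carries the entire content of the proposition, and the mechanism you propose for it fails. Bounding $A_tM^4w$ from below by a $t$-independent weight $U$ and then using $\int|hw|^2U^{-1}\le\int|h|^2w$ requires $w\le U\lesssim\inf_{0<t<t_0}A_tM^4w$. This is false. Take $w=\mathbf 1_{B(0,\varepsilon)}$. Then $M^4w(z)\lesssim(\varepsilon/|z|)^d(1+\log(|z|/\varepsilon))^3$ for $|z|\ge\varepsilon$, hence $A_tM^4w\lesssim(\varepsilon/t)^d(1+\log(t/\varepsilon))^4$ on $B(0,\varepsilon)$, while $w=1$ and $M^3w\gtrsim1$ there. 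Now let $\varepsilon/t\to0$. In addition, $M^3w$ is not an $A_1$ weight in general: for $w=\mathbf 1_{B(0,1)}$, the ratio $M^4w/M^3w$ grows like $\log|x|$. So the $A_1$-type form of Wilson's inequality you quote is not available for $U=M^3w$.

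The missing input is the randomized Calder\'on--Zygmund bound $\int|T_\epsilon F|^2w\lesssim\int|F|^2M^3w$ used in \cite[Proposition 15]{BeltranBennett2017Subdyadic}, which holds for arbitrary weights. The scale-$t$ smoothing must be absorbed by a Cauchy--Schwarz argument in the integral at each scale, not by a pointwise comparison of weights. For instance, choose $\widehat\kappa$ supported in an annulus and group $\int_0^{t_0}F_t*\kappa_t\,\frac{\mathrm{d}t}{t}$ into dyadic blocks $H_k$. Randomization then gives $\int|\sum_kH_k|^2w\lesssim\int\sum_k|H_k|^2M^3w$. Cauchy--Schwarz in $t$, together with $|F_t*\kappa_t|^2\lesssim|F_t|^2*|\kappa_t|$ and $|\kappa|\le C\eta$, bounds this by $\int_0^{t_0}\int|F_t|^2A_tM^3w$. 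Dualizing gives your second factor, since $A_tM^3w\le A_tM^4w$.

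Once this is available, though, the duality is a detour. Applying the randomized estimate directly to $F_t=f*\phi_t$ gives the weight $M^3w$, because $F_t*\kappa_t=F_t$. Following this with Lemma~\ref{lem:lp-scale-averaging} and Lemma~\ref{lem:log-geometric-averaging} is exactly the paper's proof.

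Note also a problem with the cancellation step. Moving $\phi_t$ rather than $\kappa_t$ onto $hw$, as you suggest, leaves $f*\kappa_t$ instead of $f*\phi_t$ on the other side, and $g_{\log,\gamma,\beta}(f)$ does not control that. The cancellation issue is resolved instead by choosing $\widehat\kappa$ supported in an annulus.
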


\begin{proof}
The identity
\[
        P_{\mathrm{hi}}f
        =
        \int_0^{t_0}f*\phi_t\,\frac{\mathrm{d}t}{t}
\]
is the truncated Calder\'on reproducing formula. Since
$t_0=2^{-k_1}$ is a dyadic endpoint, write
\[
        P_{\mathrm{hi}}f
        =
        \sum_{j=0}^{5}f_j,
        \qquad
        f_j
        =
        \sum_{\substack{k\leq-k_1-1\\ k\equiv j\!\!\!\pmod 6}}
        \int_{2^k}^{2^{k+1}}f*\phi_t\,\frac{\mathrm{d}t}{t}.
\]
The multiplier of the $k$th integral is supported in
\[
        \{\xi:2^{-k-2}\leq|\xi|\leq2^{-k+1}\}.
\]
Within each residue class modulo six these annuli are disjoint after a fixed
enlargement. Hence the randomized Calder\'on--Zygmund operators in the proof
of \cite[Proposition 15]{BeltranBennett2017Subdyadic} retain kernel bounds
uniform in the restricted index set $k\leq-k_1-1$. No boundary term appears,
because $t_0$ is an endpoint of the dyadic decomposition. The same
randomization, weighted Calder\'on--Zygmund estimate, and Khintchine argument
therefore give
\[
        \int_{\mathbb{R}^d}|P_{\mathrm{hi}}f(x)|^2w(x)\,\mathrm{d}x
        \lesssim
        \int_0^{t_0}
        \int_{\mathbb{R}^d}
        |f*\phi_t(y)|^2M^3w(y)
        \,\mathrm{d}y\,\frac{\mathrm{d}t}{t}.
\]
Applying Lemma \ref{lem:lp-scale-averaging} at each scale $t$, we obtain
\[
        \int_{\mathbb{R}^d}|P_{\mathrm{hi}}f(x)|^2w(x)\,\mathrm{d}x
        \lesssim
        \int_0^{t_0}
        \int_{\mathbb{R}^d}
        |f*\phi_t(y)|^2A_tM^4w(y)
        \,\mathrm{d}y\,\frac{\mathrm{d}t}{t}.
\]
We now apply Lemma \ref{lem:log-geometric-averaging} with
\[
        F(y)=|f*\phi_t(y)|^2,
        \qquad
        W=M^4w.
\]
This gives
\[
\begin{aligned}
        \int_{\mathbb{R}^d}|P_{\mathrm{hi}}f(x)|^2w(x)\,\mathrm{d}x
        \lesssim
        \int_0^{t_0}
        \int_{\mathbb{R}^d}
        \int_{|y-x|\leq a_\gamma(t)}
        |f*\phi_t(y)|^2
        \frac{\mathrm{d}y}{a_\gamma(t)^d}
        \sup_{|z-x|\leq a_\gamma(t)}A_tM^4w(z)
        \,\mathrm{d}x\,\frac{\mathrm{d}t}{t}.
\end{aligned}
\]
Insert
\[
        1
        =
        (\log(1/t))^{2\beta}(\log(1/t))^{-2\beta}.
\]
By Definition \ref{def:log-maximal-candidate},
\[
        \sup_{|z-x|\leq a_\gamma(t)}
        (\log(1/t))^{-2\beta}A_tM^4w(z)
        \leq
        M_{\log,\gamma,\beta}M^4w(x).
\]
Therefore
\[
\begin{aligned}
        \int_{\mathbb{R}^d}|P_{\mathrm{hi}}f(x)|^2w(x)\,\mathrm{d}x
        &\lesssim
        \int_{\mathbb{R}^d}
        \bigg(
        \int_0^{t_0}
        \int_{|y-x|\leq a_\gamma(t)}
        |f*\phi_t(y)|^2
        \frac{\mathrm{d}y}{a_\gamma(t)^d}
        (\log(1/t))^{2\beta}
        \frac{\mathrm{d}t}{t}
        \bigg)  \\
        &\hspace{3.5cm}\times
        M_{\log,\gamma,\beta}M^4w(x)
        \,\mathrm{d}x .
\end{aligned}
\]
The expression in parentheses is exactly
$g_{\log,\gamma,\beta}(f)(x)^2$, and the proof follows.
\end{proof}

\begin{theorem}[Weighted logarithmic multiplier estimate]
\label{thm:weighted-log-multiplier}
Let $\gamma>1$, let $\beta\geq0$, and let $\sigma>d/2$. Set
\[
        \lambda=\frac{2\sigma}{d}.
\]
Let $m_{\mathrm{hi}}$ be supported in
\[
        \{\xi\in\mathbb{R}^d:|\xi|\geq 2t_0^{-1}\}
\]
and satisfy the localized logarithmic Miyachi condition of Hypothesis
\ref{hyp:localized-log-miyachi}. Then, for every nonnegative locally
integrable weight $w$,
\[
        \int_{\mathbb{R}^d}|T_{m_{\mathrm{hi}}}f(x)|^2w(x)\,\mathrm{d}x
        \lesssim
        \int_{\mathbb{R}^d}|f(x)|^2
        M^2\bigl(M_{\log,\gamma,\beta}M^4w\bigr)(x)\,\mathrm{d}x .
\]
The implicit constant depends on $d$, $\gamma$, $\beta$, $\sigma$,
$t_0$, the localized Miyachi constant $C_m$, and on the fixed functions and
cutoffs $\phi$, $\kappa$, and $\eta$.
\end{theorem}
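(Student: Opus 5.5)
The plan chains the three ingredients of Section \ref{sec:weighted} with the pointwise estimate. The first step is to note that $T_{m_{\mathrm{hi}}}f=P_{\mathrm{hi}}T_{m_{\mathrm{hi}}}f$. This follows from the support assumption $\operatorname{supp}m_{\mathrm{hi}}\subset\{|\xi|\geq2t_0^{-1}\}$ together with $p_{\mathrm{hi}}=1$ on that set. Applying Proposition \ref{prop:reverse-log-g} to the function $T_{m_{\mathrm{hi}}}f$ then gives
\[
        \int_{\mathbb{R}^d}|T_{m_{\mathrm{hi}}}f|^2w
        \lesssim
        \int_{\mathbb{R}^d}
        g_{\log,\gamma,\beta}(T_{m_{\mathrm{hi}}}f)(x)^2\,
        W(x)\,\mathrm{d}x,
        \qquad
        W=M_{\log,\gamma,\beta}M^4w.
\]
Here $\beta\geq0$ is needed, as that proposition requires.

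The second step applies Theorem \ref{thm:log-bb-pointwise} to $m_{\mathrm{hi}}$, with $\sigma>d/2$ and $\lambda=2\sigma/d>1$. This yields the pointwise bound $g_{\log,\gamma,\beta}(T_{m_{\mathrm{hi}}}f)\leq C g^*_{\log,\gamma,0,\lambda}(f)$. The constant depends on $C_m$ and the structural constants, and not on $f$ or $x$. Substituting this bound, the right-hand side is controlled by $\int g^*_{\log,\gamma,0,\lambda}(f)^2W$.

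The third step invokes Proposition \ref{prop:forward-log-g-star} with the weight $W$. This is legitimate as soon as $W$ is a nonnegative locally integrable function, and it produces $\int|f|^2M^2W$. That is exactly the claimed right-hand side $M^2(M_{\log,\gamma,\beta}M^4w)$.

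The main obstacle is the bookkeeping of admissibility rather than any new estimate. First, $W$ must be locally integrable, or else the inequality must be read with the convention that the right side is $+\infty$. If $M^4w\equiv\infty$, or if $W$ fails to be locally integrable, the conclusion is trivial because $M^2W\equiv\infty$. Otherwise $W$ is a measurable nonnegative function, since it is a supremum of continuous smooth averages. I would then run the forward estimate, whose proof only used $K_t^\lambda*W\lesssim MW$ and Wilson's inequality. Those remain valid, in the sense that both sides may be infinite, for arbitrary nonnegative measurable $W$. Second, $f$ should first be taken Schwartz, so that $T_{m_{\mathrm{hi}}}f$ is a tempered function to which the reproducing formula applies. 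The general case then follows by density and Fatou's lemma.
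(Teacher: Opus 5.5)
Your proposal is correct and follows the paper's proof step for step. You use the identity $P_{\mathrm{hi}}T_{m_{\mathrm{hi}}}f=T_{m_{\mathrm{hi}}}f$ from the support of $m_{\mathrm{hi}}$, then Proposition~\ref{prop:reverse-log-g}, the pointwise bound of Theorem~\ref{thm:log-bb-pointwise}, and finally Proposition~\ref{prop:forward-log-g-star} with $W=M_{\log,\gamma,\beta}M^4w$. Your remarks on the admissibility of $W$ and the density reduction are extra care that the paper leaves implicit.
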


\begin{proof}
By the support assumption on $m_{\mathrm{hi}}$ and the identity
$p_{\mathrm{hi}}=1$ on $\{|\xi|\geq2t_0^{-1}\}$, we have
\[
        P_{\mathrm{hi}}T_{m_{\mathrm{hi}}}f=T_{m_{\mathrm{hi}}}f.
\]
Applying Proposition \ref{prop:reverse-log-g} to $T_{m_{\mathrm{hi}}}f$
therefore gives
\[
        \int_{\mathbb{R}^d}|T_{m_{\mathrm{hi}}}f(x)|^2w(x)\,\mathrm{d}x
        \lesssim
        \int_{\mathbb{R}^d}
        g_{\log,\gamma,\beta}(T_{m_{\mathrm{hi}}}f)(x)^2
        M_{\log,\gamma,\beta}M^4w(x)
        \,\mathrm{d}x .
\]
By the pointwise estimate of Theorem \ref{thm:log-bb-pointwise},
\[
        g_{\log,\gamma,\beta}(T_{m_{\mathrm{hi}}}f)(x)
        \lesssim
        g^*_{\log,\gamma,0,\lambda}(f)(x).
\]
Therefore
\[
        \int_{\mathbb{R}^d}|T_{m_{\mathrm{hi}}}f(x)|^2w(x)\,\mathrm{d}x
        \lesssim
        \int_{\mathbb{R}^d}
        g^*_{\log,\gamma,0,\lambda}(f)(x)^2
        M_{\log,\gamma,\beta}M^4w(x)
        \,\mathrm{d}x .
\]
Finally apply Proposition \ref{prop:forward-log-g-star} with the weight
\[
        W=M_{\log,\gamma,\beta}M^4w.
\]
This gives
\[
        \int_{\mathbb{R}^d}|T_{m_{\mathrm{hi}}}f(x)|^2w(x)\,\mathrm{d}x
        \lesssim
        \int_{\mathbb{R}^d}
        |f(x)|^2M^2W(x)
        \,\mathrm{d}x,
\]
which is the claimed estimate.
\end{proof}

\begin{remark}[Low frequencies]
\label{rem:weighted-low-frequency}
Theorem \ref{thm:weighted-log-multiplier} is stated for the high-frequency
piece governed by the logarithmic square functions. If
$m=m_{\mathrm{lo}}+m_{\mathrm{hi}}$, where $m_{\mathrm{lo}}$ is a smooth
compactly supported multiplier and $m_{\mathrm{hi}}$ is supported in
$\{|\xi|\geq2t_0^{-1}\}$ and satisfies Hypothesis
\ref{hyp:localized-log-miyachi}, then the low-frequency term is controlled
by the standard weighted theory for smooth compactly supported multipliers.
Thus the theorem gives the high-frequency part of the global inhomogeneous
weighted estimate, with the low-frequency contribution supplied by these
standard bounds.
\end{remark}

%% file: 60_examples_BB.tex
\section{The logarithmic model multiplier}
\label{sec:model-multiplier}

We verify that the logarithmic model multiplier belongs to the localized
logarithmic Miyachi class introduced above, and record the resulting
$L^p$ consequences.

Let
\[
        L(\xi)=\frac12\log(e^2+|\xi|^2),
        \qquad
        m_{\gamma,\beta}(\xi)
        =
        L(\xi)^{-\beta}e^{iL(\xi)^\gamma},
        \qquad
        \gamma>1.
\]
The function $L$ is smooth on $\mathbb{R}^d$, satisfies $L\geq1$, and obeys
\[
        L(\xi)\sim \log|\xi|
        \qquad\text{for }|\xi|\gg1.
\]

\begin{proposition}[The model satisfies the logarithmic Miyachi condition]
\label{prop:model-log-miyachi}
Let $\gamma>1$, $\beta\in\mathbb{R}$, and let
\[
        L(\xi)=\frac12\log(e^2+|\xi|^2),
        \qquad
        m_{\gamma,\beta}(\xi)
        =
        L(\xi)^{-\beta}e^{iL(\xi)^\gamma}.
\]
Then, in the high-frequency region, $m_{\gamma,\beta}$ satisfies the
pointwise logarithmic Miyachi estimates
\[
        |D^\nu m_{\gamma,\beta}(\xi)|
        \lesssim_{\nu,\gamma,\beta}
        (\log|\xi|)^{-\beta}
        \left(
        \frac{(\log|\xi|)^{\gamma-1}}{|\xi|}
        \right)^{|\nu|}
\]
for every multi-index $\nu$. Consequently,
$m_{\gamma,\beta}$ satisfies Hypothesis
\ref{hyp:localized-log-miyachi} for every finite Sobolev order
$\sigma$.
\end{proposition}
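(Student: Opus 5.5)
The plan is to prove the pointwise derivative bounds by a Faà di Bruno computation, organized so that every derivative of the oscillatory factor costs $L^{\gamma-1}/|\xi|$. The localized Sobolev conclusion then follows from Lemma \ref{lem:pointwise-to-sobolev}.

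First record the symbol estimates for $L$ itself. Since $L(\xi)=\frac12\log(e^2+|\xi|^2)$ is a smooth function of $|\xi|^2$ with logarithmic structure, induction on $|\nu|$ gives, for $|\nu|\geq1$,
\[
        |D^\nu L(\xi)|\lesssim_\nu (e+|\xi|)^{-|\nu|}.
\]
Indeed, $\partial_j L=\xi_j/(e^2+|\xi|^2)$ is a classical symbol of order $-1$. For any real power $s$, the chain rule and the lower bound $L\geq1$ then give
\[
        |D^\nu L^s|\lesssim_{\nu,s} L^{s}(e+|\xi|)^{-|\nu|}.
\]
Every derivative falling on $L^s$ produces a factor $L^{s-j}\prod D^{\mu_i}L$, and $L^{s-j}\leq L^s$.

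Next treat the phase. Write $e^{iL^\gamma}$ and apply Faà di Bruno: $D^\nu e^{iL^\gamma}$ is $e^{iL^\gamma}$ times a finite sum of products $\prod_{i=1}^{j} D^{\mu_i}(L^\gamma)$, where $\mu_1+\dots+\mu_j=\nu$ and each $|\mu_i|\geq1$. By the previous step each factor is bounded by $L^\gamma(e+|\xi|)^{-|\mu_i|}$. That bound is too lossy, and sharpening it is the main point. The first derivative carries only $L^{\gamma-1}$, because
\[
        D^{\mu}(L^\gamma)=\gamma L^{\gamma-1}D^\mu L+\text{(terms with more derivatives on } L),
\]
and every term in the Faà di Bruno expansion of $D^\mu(L^\gamma)$ has the form $L^{\gamma-k}\prod_{r=1}^k D^{\alpha_r}L$ with $k\geq1$. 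Hence
\[
        |D^{\mu}(L^\gamma)|\lesssim L^{\gamma-1}(e+|\xi|)^{-|\mu|}.
\]
A product of $j$ such factors is therefore bounded by $L^{j(\gamma-1)}|\xi|^{-|\nu|}$. Since $j\leq|\nu|$ and $\gamma>1$, this is at most $\bigl(L^{\gamma-1}/|\xi|\bigr)^{|\nu|}$, up to a factor $L^{(\gamma-1)(j-|\nu|)}\leq1$. Combining with the amplitude via Leibniz gives
\[
        |D^\nu m_{\gamma,\beta}|\lesssim L^{-\beta}\bigl(L^{\gamma-1}/|\xi|\bigr)^{|\nu|}.
\]
Any term where derivatives hit $L^{-\beta}$ is even better, since such a derivative gains $|\xi|^{-1}$ rather than $L^{\gamma-1}|\xi|^{-1}$. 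Finally, $L(\xi)=\log|\xi|+O(|\xi|^{-2})$, so $L\sim\log|\xi|$ for $|\xi|\geq R_0$, with comparability constants absorbing $L^{-\beta}$ for either sign of $\beta$. This yields the stated bound. Note that $(\log|\xi|)^{\gamma-1}/|\xi|=\rho(|\xi|)^{-1}$.

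For the consequence, fix any finite $\sigma$ and choose the bump regularity $N>\sigma$ as allowed in Definition \ref{def:adapted-bump}. The pointwise bounds are exactly the hypothesis of Lemma \ref{lem:pointwise-to-sobolev}, which gives Hypothesis \ref{hyp:localized-log-miyachi}.

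The main obstacle is the bookkeeping showing that the power of $L$ never exceeds $(\gamma-1)|\nu|$, in particular that the highest-order Faà di Bruno term (all $j=|\nu|$ first derivatives) is the dominant one. The key observation is that each factor $D^{\mu_i}(L^\gamma)$ contributes at most one power $L^{\gamma-1}$, and that there are at most $|\nu|$ factors. Uniformity of the constants in $\xi$ then follows from $L\geq1$.
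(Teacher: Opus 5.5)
Your proposal is correct and follows essentially the same route as the paper. It uses Fa\`a di Bruno plus Leibniz, with the key observation that each derivative of the phase $L^\gamma$ costs at most one factor $L^{\gamma-1}/|\xi|$, so the product of first derivatives dominates; it then uses $L\sim\log|\xi|$ and Lemma~\ref{lem:pointwise-to-sobolev} for the localized Sobolev condition. The only difference is cosmetic: you run Fa\`a di Bruno directly in $\xi$, using that $\partial_jL$ is a symbol of order $-1$. The paper instead differentiates the radial profile $R\mapsto L(R)^{-\beta}e^{iL(R)^\gamma}$ and then appeals to the formulas for derivatives of radial functions, a step your version avoids.
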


\begin{proof}
Write, for $R\geq1$,
\[
        L(R)=\frac12\log(e^2+R^2),
        \qquad
        a(R)=L(R)^{-\beta},
        \qquad
        \Phi(R)=L(R)^\gamma.
\]
Then
\[
        m_{\gamma,\beta}(\xi)
        =
        a(|\xi|)e^{i\Phi(|\xi|)}
\]
and this radial function is smooth at the origin because $L(\xi)$ is a
smooth function of $|\xi|^2$.

For every integer $j\geq1$,
\[
        |L^{(j)}(R)|
        \lesssim_j
        R^{-j},
        \qquad R\gg1,
\]
and therefore
\[
        |\Phi^{(j)}(R)|
        \lesssim_{j,\gamma}
        \frac{L(R)^{\gamma-1}}{R^j},
\]
while
\[
        |a^{(j)}(R)|
        \lesssim_{j,\beta}
        L(R)^{-\beta}R^{-j}.
\]
By Faà di Bruno's formula,
\[
        \left|\frac{\mathrm{d}^j}{\mathrm{d}R^j}
        e^{i\Phi(R)}\right|
        \lesssim_{j,\gamma}
        \left(\frac{L(R)^{\gamma-1}}{R}\right)^j .
\]
Indeed, each term is a product of derivatives of $\Phi$, and the worst
term is the $j$-fold product of $\Phi'(R)$.

Therefore, by Leibniz' rule,
\[
        \left|
        \frac{\mathrm{d}^j}{\mathrm{d}R^j}
        \left(a(R)e^{i\Phi(R)}\right)
        \right|
        \lesssim_{j,\gamma,\beta}
        L(R)^{-\beta}
        \left(\frac{L(R)^{\gamma-1}}{R}\right)^j .
\]
The standard formulae for derivatives of radial functions then give, for
$|\xi|\gg1$,
\[
        |D^\nu m_{\gamma,\beta}(\xi)|
        \lesssim_{\nu,\gamma,\beta}
        L(\xi)^{-\beta}
        \left(
        \frac{L(\xi)^{\gamma-1}}{|\xi|}
        \right)^{|\nu|}.
\]
Since $L(\xi)\sim\log|\xi|$ in the high-frequency region, this is
equivalent to the estimate in the statement.

Since
\[
        \rho(R)^{-1}
        =
        \frac{(\log R)^{\gamma-1}}{R},
\]
this is precisely the pointwise logarithmic Miyachi condition. The localized
Sobolev condition follows by applying these derivative estimates on each
log-subdyadic ball $B$, using $r(B)\sim\rho(R_B)$ and the derivative
bounds for the normalized cutoffs adapted to $B$.
\end{proof}

\begin{remark}[Comparison with classical Mikhlin classes]
\label{rem:comparison-symbol-classes}
The logarithmic Miyachi class contains the usual Mikhlin class with
logarithmic decay. Indeed, if
\[
        |D^\nu m(\xi)|
        \lesssim_\nu
        (\log|\xi|)^{-\beta}|\xi|^{-|\nu|}
\]
for $|\xi|\gg1$, then
\[
        |D^\nu m(\xi)|
        \lesssim_\nu
        (\log|\xi|)^{-\beta}
        \rho(|\xi|)^{-|\nu|},
\]
because
\[
        |\xi|^{-1}
        \leq
        \rho(|\xi|)^{-1}
        =
        \frac{(\log|\xi|)^{\gamma-1}}{|\xi|}
\]
for $|\xi|$ large. Hence the logarithmic Mikhlin class with decay
$(\log|\xi|)^{-\beta}$ is contained in the logarithmic Miyachi class.

The inclusion is strict. Indeed, the radial model
\[
        L(R)=\frac12\log(e^2+R^2),
        \qquad
        m_{\gamma,\beta}(R)
        =
        L(R)^{-\beta}e^{iL(R)^\gamma}
\]
satisfies the logarithmic Miyachi condition, but it does not satisfy the
classical Mikhlin bound with logarithmic decay. Differentiating gives
\[
        \frac{\mathrm{d}}{\mathrm{d}R}m_{\gamma,\beta}(R)
        =
        L'(R)e^{iL(R)^\gamma}
        \left[
        -\beta L(R)^{-\beta-1}
        +
        i\gamma L(R)^{-\beta+\gamma-1}
        \right].
\]
The two terms in brackets are real and purely imaginary, respectively.
Hence there is no cancellation between them, and therefore
\[
        \left|
        \frac{\mathrm{d}}{\mathrm{d}R}m_{\gamma,\beta}(R)
        \right|
        \geq
        c_{\gamma,\beta}
        L(R)^{-\beta+\gamma-1}\frac1R
\]
for all sufficiently large $R$. Since $\gamma>1$, this cannot be bounded
by
\[
        C(\log R)^{-\beta}\frac1R .
\]
Thus the model belongs to the logarithmic Miyachi class but is not a
classical Mikhlin symbol with logarithmic decay.
\end{remark}

\begin{corollary}[Weighted estimate for the model]
\label{cor:model-weighted}
Let $\gamma>1$, $\beta\geq0$, and let
\[
        L(\xi)=\frac12\log(e^2+|\xi|^2),
        \qquad
        m_{\gamma,\beta}(\xi)
        =
        L(\xi)^{-\beta}e^{iL(\xi)^\gamma}.
\]
Let
\[
        m_{\mathrm{hi}}=\chi_{\mathrm{hi}}m_{\gamma,\beta},
        \qquad
        m_{\mathrm{lo}}=(1-\chi_{\mathrm{hi}})m_{\gamma,\beta}
\]
be the fixed inhomogeneous decomposition from Subsection
\ref{subsec:square-functions}. Then
\[
        \int_{\mathbb{R}^d}
        |T_{m_{\mathrm{hi}}}f(x)|^2w(x)\,\mathrm{d}x
        \lesssim
        \int_{\mathbb{R}^d}
        |f(x)|^2
        M^2\bigl(M_{\log,\gamma,\beta}M^4w\bigr)(x)
        \,\mathrm{d}x
\]
for every nonnegative locally integrable weight $w$.
\end{corollary}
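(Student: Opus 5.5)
The plan is to reduce the corollary to Theorem \ref{thm:weighted-log-multiplier}. That theorem requires two things of $m_{\mathrm{hi}}=\chi_{\mathrm{hi}}m_{\gamma,\beta}$: the support condition $\operatorname{supp}m_{\mathrm{hi}}\subset\{|\xi|\geq 2t_0^{-1}\}$, and Hypothesis \ref{hyp:localized-log-miyachi} for some $\sigma>d/2$.

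The support condition is immediate from the definition of $\chi_{\mathrm{hi}}$ in Subsection \ref{subsec:square-functions}, which vanishes for $|\xi|\leq 2t_0^{-1}$.

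For the hypothesis, I will verify the pointwise derivative bounds of Lemma \ref{lem:pointwise-to-sobolev} for the product $\chi_{\mathrm{hi}}m_{\gamma,\beta}$ on $|\xi|\geq R_0$, with $N>\sigma$ and some fixed $\sigma>d/2$, say $\sigma=d/2+1$.

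\begin{itemize}
\item Proposition \ref{prop:model-log-miyachi} gives $|D^\nu m_{\gamma,\beta}(\xi)|\lesssim(\log|\xi|)^{-\beta}\rho(|\xi|)^{-|\nu|}$ at high frequency.
\item The cutoff $\chi_{\mathrm{hi}}$ is a fixed smooth function that is constant outside the compact annulus $2t_0^{-1}\leq|\xi|\leq 4t_0^{-1}$. Its derivatives are therefore bounded and supported there.
\item On that annulus, $\log|\xi|$ and $\rho(|\xi|)$ are comparable to fixed constants, because $2t_0^{-1}\geq 2^{k_0+2}\geq R_0$.
\item Leibniz' rule then gives the bound $|D^\nu(\chi_{\mathrm{hi}}m_{\gamma,\beta})(\xi)|\lesssim(\log|\xi|)^{-\beta}\rho(|\xi|)^{-|\nu|}$ for $|\xi|\geq R_0$. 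The constants depend on $t_0,\gamma,\beta$ and on finitely many derivatives of $\chi_{\mathrm{hi}}$.
\end{itemize}

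Lemma \ref{lem:pointwise-to-sobolev} then yields Hypothesis \ref{hyp:localized-log-miyachi} for $m_{\mathrm{hi}}$.

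One check is needed here. Log-subdyadic balls have $R_B\geq R_0$, so the hypothesis only involves $m_{\mathrm{hi}}$ on the region covered by the lemma, possibly after a harmless enlargement of $R_0$. If a bump $C_1B$ reaches slightly below $R_0$, then $m_{\mathrm{hi}}$ vanishes there anyway, because $2t_0^{-1}>R_0$. If needed, I will instead enlarge $R_0$ in the structural constants; Lemma \ref{lem:rho-local-stability} permits this.

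With both requirements in place, I apply Theorem \ref{thm:weighted-log-multiplier} with $\lambda=2\sigma/d$ and $\beta\geq0$. This gives exactly the stated weighted inequality, with the implicit constant depending on $d,\gamma,\beta,t_0$ and the fixed cutoffs.

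The main obstacle is modest: the bookkeeping showing that multiplying by $\chi_{\mathrm{hi}}$ does not spoil the localized Sobolev normalization. This is handled by the compactness of the transition region. There, every power of $\log|\xi|$ and of $\rho(|\xi|)$ is a bounded constant, so the Leibniz terms involving derivatives of $\chi_{\mathrm{hi}}$ are dominated by the claimed bound.
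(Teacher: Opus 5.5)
Your proposal is correct and is essentially the paper's proof. The paper likewise uses Proposition~\ref{prop:model-log-miyachi} together with Leibniz' rule for the fixed cutoff $\chi_{\mathrm{hi}}$, whose derivatives are confined to a fixed compact annulus, to place $m_{\mathrm{hi}}$ in Hypothesis~\ref{hyp:localized-log-miyachi}, and then applies Theorem~\ref{thm:weighted-log-multiplier} directly; your explicit use of Lemma~\ref{lem:pointwise-to-sobolev} and your remark that $m_{\mathrm{hi}}$ vanishes on $\{|\xi|\leq 2t_0^{-1}\}\supset\{|\xi|<R_0\}$ only spell out details the paper leaves implicit.
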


\begin{proof}
By Proposition \ref{prop:model-log-miyachi} and Leibniz' rule with the
fixed cutoff $\chi_{\mathrm{hi}}$, the high-frequency symbol
$m_{\mathrm{hi}}$ satisfies Hypothesis
\ref{hyp:localized-log-miyachi}. Indeed, the derivatives falling on
$\chi_{\mathrm{hi}}$ are supported in a fixed annulus and are therefore
absorbed into the structural constant. The result follows directly from
Theorem \ref{thm:weighted-log-multiplier}.
\end{proof}

\begin{corollary}[$L^p$ boundedness for the model]
\label{cor:model-lp}
Let $\gamma>1$, $1<p<\infty$, and $\beta\geq0$. Let
\[
        L(\xi)=\frac12\log(e^2+|\xi|^2),
        \qquad
        m_{\gamma,\beta}(\xi)
        =
        L(\xi)^{-\beta}e^{iL(\xi)^\gamma}.
\]
If $p\neq2$, assume
\[
        \beta>
        d(\gamma-1)
        \left|\frac12-\frac1p\right|.
\]
Then
\[
        \|T_{m_{\gamma,\beta}}f\|_{L^p(\mathbb{R}^d)}
        \lesssim
        \|f\|_{L^p(\mathbb{R}^d)}.
\]
For $p=2$, the same conclusion holds for every $\beta\geq0$.
\end{corollary}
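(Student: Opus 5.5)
The case $p=2$ is immediate from Plancherel: since $L\geq1$ and $\beta\geq0$, we have $|m_{\gamma,\beta}(\xi)|=L(\xi)^{-\beta}\leq1$, so $\|T_{m_{\gamma,\beta}}f\|_{L^2}\leq\|f\|_{L^2}$. For $p\neq2$ we use the splitting of Corollary \ref{cor:model-weighted}, $m_{\gamma,\beta}=m_{\mathrm{lo}}+m_{\mathrm{hi}}$. The low part $m_{\mathrm{lo}}=(1-\chi_{\mathrm{hi}})m_{\gamma,\beta}$ is smooth and compactly supported, so its kernel is Schwartz and $T_{m_{\mathrm{lo}}}$ is bounded on every $L^p$, $1\leq p\leq\infty$. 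It remains to treat $T_{m_{\mathrm{hi}}}$.

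\emph{Case $2<p<\infty$.} We argue by duality on the weighted estimate. Write $\|T_{m_{\mathrm{hi}}}f\|_{L^p}^2=\|\,|T_{m_{\mathrm{hi}}}f|^2\|_{L^{p/2}}$ and test against $w\geq0$ with $\|w\|_{L^{r}}=1$, where $r=(p/2)'$. Corollary \ref{cor:model-weighted} and H\"older's inequality give
\[
 \int|T_{m_{\mathrm{hi}}}f|^2w\lesssim\|f\|_{L^p}^2\,\bigl\|M^2\bigl(M_{\log,\gamma,\beta}M^4w\bigr)\bigr\|_{L^{r}}.
\]
Since $1<r<\infty$, the Hardy--Littlewood operators $M^2$ and $M^4$ are bounded on $L^r$. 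The remaining step is the $L^r$ bound for $M_{\log,\gamma,\beta}$ announced in the introduction, valid when $2\beta>d(\gamma-1)/r$. Here $1/r=1-2/p=2(1/2-1/p)$, so this condition is exactly $\beta>d(\gamma-1)(1/2-1/p)$, which is the hypothesis.

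The maximal theorem is proved in Section \ref{sec:lp-bounds}, after this statement. If it may not be assumed here, I would prove it as follows. Decompose $0<t<t_0$ dyadically, $t\sim2^{-k}$. At such a scale the supremum over $|y-x|\leq a_\gamma(t)$ of $A_tw(y)$ is bounded by $C\,k^{d(\gamma-1)/r}$ times an $L^r$-bounded maximal function. To see this, cover the ball $B(x,a_\gamma(t))$ by about $k^{d(\gamma-1)}$ balls of radius $t$. Take the $\ell^r$ sum over these pieces of local averages; the sum has $L^r$ norm at most $\|w\|_{L^r}$. Convert the $\ell^\infty$ supremum to this $\ell^r$ sum at a cost of $(\#\text{pieces})^{1/r}$. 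The factor $k^{-2\beta}$ then makes $\sum_k k^{d(\gamma-1)/r-2\beta}$ finite only in the borderline-summable sense. This is where the strict inequality enters, but a more careful argument would use $\ell^r$ summation over $k$ of the pieces, giving the series $\sum_k k^{(d(\gamma-1)/r-2\beta)r}$. I expect this maximal bound to be the main obstacle, namely getting the exponent exactly to the threshold rather than losing an $\varepsilon$.

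\emph{Case $1<p<2$.} By Lemma \ref{lem:log-miyachi-conjugation}, $\overline{m_{\mathrm{hi}}}$ satisfies Hypothesis \ref{hyp:localized-log-miyachi} with the same constant. Hence Theorem \ref{thm:weighted-log-multiplier} applies to $T_{\overline{m_{\mathrm{hi}}}}$, and the argument above bounds it on $L^{p'}$ with $p'>2$. The condition is symmetric, since $|1/2-1/p'|=|1/2-1/p|$. The adjoint of $T_{m_{\mathrm{hi}}}$ is $T_{\overline{m_{\mathrm{hi}}}}$, so $T_{m_{\mathrm{hi}}}$ is bounded on $L^p$. Adding the low-frequency part concludes the proof.
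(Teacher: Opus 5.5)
Your main argument is correct and follows the paper's route. The paper checks Hypothesis~\ref{hyp:localized-log-miyachi} for $m_{\mathrm{hi}}$ and cites Corollary~\ref{cor:lp-log-multiplier-full-range}. That corollary's proof is exactly your duality with $r=(p/2)'$, Theorem~\ref{thm:weighted-log-multiplier}, Lemma~\ref{lem:log-maximal-Lr}, and conjugation plus duality for $1<p<2$. Citing the maximal lemma from Section~\ref{sec:lp-bounds} is legitimate: the paper does the same, and that lemma does not depend on this corollary. Your Plancherel argument for $p=2$, using $|m_{\gamma,\beta}|=L^{-\beta}\leq1$, is a simpler substitute for the paper's choice $w\equiv1$.

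Your fallback proof of the maximal bound, however, would fail, and the loss is not an $\varepsilon$. The per-scale step is fine. For $t\sim2^{-k}$, covering $B(x,a_\gamma(t))$ by $\sim k^{d(\gamma-1)}$ balls of radius $t$ gives $\sup_{|y-x|\leq a_\gamma(t)}A_tw(y)\lesssim k^{d(\gamma-1)/r}G_k(x)$ with $\|G_k\|_{L^r}\lesssim\|w\|_{L^r}$. But controlling the supremum over $k$ by an $\ell^r$ sum gives
\[
\|M_{\log,\gamma,\beta}w\|_{L^r}^r\lesssim\sum_{k}k^{d(\gamma-1)-2\beta r}\,\|w\|_{L^r}^r .
\]
This is finite only when $2\beta>(d(\gamma-1)+1)/r$, that is, $\beta>(d(\gamma-1)+1)(\frac12-\frac1p)$, which is strictly worse than the stated threshold. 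The $\ell^1$ version is worse still, needing $2\beta>d(\gamma-1)/r+1$. The hypothesis $2\beta>d(\gamma-1)/r$ only makes the exponent $d(\gamma-1)/r-2\beta$ negative. It does not make either series converge, contrary to your remark about borderline summability.

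The missing idea is to avoid summing over scales altogether.
\begin{itemize}
\item Use the strict inequality to choose $1<s<r$ with $2\beta>d(\gamma-1)/s$.
\item For $|y-x|\leq a_\gamma(t)$, each ball $B(y,2^{j+1}t)$ lies in a ball centered at $x$ of radius $\lesssim a_\gamma(t)+2^jt$.
\item Decomposing $\eta_t$ into annuli and applying H\"older with exponent $s$ then gives
\[
A_tw(y)\lesssim\Bigl(\frac{a_\gamma(t)}{t}\Bigr)^{d/s}M_sw(x)=(\log(1/t))^{d(\gamma-1)/s}M_sw(x),\qquad M_sw=\bigl(M(|w|^s)\bigr)^{1/s}.
\]
\item Hence $M_{\log,\gamma,\beta}w\lesssim M_sw$ pointwise, uniformly over all scales, and $M_s$ is bounded on $L^r$ since $s<r$.
\end{itemize}
A single Hardy--Littlewood maximal function handles every scale at once. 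The strictness of the hypothesis is spent only on choosing $s<r$, which is why the paper reaches the threshold up to the endpoint.
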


\begin{proof}
Use the fixed inhomogeneous decomposition
\[
        m_{\mathrm{hi}}=\chi_{\mathrm{hi}}m_{\gamma,\beta},
        \qquad
        m_{\mathrm{lo}}=(1-\chi_{\mathrm{hi}})m_{\gamma,\beta},
\]
where $m_{\mathrm{lo}}$ is smooth and compactly supported and
$m_{\mathrm{hi}}$ is supported in $\{|\xi|\geq2t_0^{-1}\}$. The
low-frequency operator $T_{m_{\mathrm{lo}}}$ is bounded on $L^p$ by the
standard Mikhlin theorem.

By Proposition \ref{prop:model-log-miyachi} and Leibniz' rule with the
fixed cutoff $\chi_{\mathrm{hi}}$, the high-frequency part
$m_{\mathrm{hi}}$ satisfies Hypothesis
\ref{hyp:localized-log-miyachi}; the terms in which derivatives fall on
$\chi_{\mathrm{hi}}$ are confined to a fixed annulus. The claim follows
from Corollary \ref{cor:lp-log-multiplier-full-range}; invariance of the
localized condition under complex conjugation is provided by Lemma
\ref{lem:log-miyachi-conjugation}.
\end{proof}

\begin{remark}[Interpretation of the threshold]
\label{rem:model-threshold}
For the logarithmic model, the $L^p$ condition
\[
        \beta>
        d(\gamma-1)
        \left|\frac12-\frac1p\right|
\]
comes from the logarithmic aperture
\[
        \frac{a_\gamma(t)}{t}
        =
        (\log(1/t))^{\gamma-1}.
\]
At frequency scale $t^{-1}$, this produces an effective logarithmic
geometric loss of order
\[
        (\log(1/t))^{d(\gamma-1)|1/2-1/p|},
\]
which is compensated by the logarithmic decay
\[
        (\log|\xi|)^{-\beta}
        \sim
        (\log(1/t))^{-\beta}.
\]
The endpoint case
\[
        \beta=
        d(\gamma-1)
        \left|\frac12-\frac1p\right|
\]
is not addressed here.
\end{remark}

%% file: 70_lp_bounds_BB.tex
\section{Lebesgue space consequences}
\label{sec:lp-bounds}

We now derive unweighted $L^p$ consequences from the weighted estimate
of Theorem \ref{thm:weighted-log-multiplier}. The bounds obtained in this
section are consequences of the weighted theorem and of the logarithmic
maximal estimate below.

The relevant point is that the logarithmic geometric maximal operator is
controlled by a Hardy--Littlewood maximal operator of order $s$. This
avoids summing over logarithmic scales and gives the natural logarithmic
threshold.

\begin{lemma}[$L^r$ bound for $M_{\log,\gamma,\beta}$]
\label{lem:log-maximal-Lr}
Let $\gamma>1$, $1<r<\infty$, and suppose that
\[
        2\beta>\frac{d(\gamma-1)}{r}.
\]
Then, for every nonnegative $w\in L^r(\mathbb{R}^d)$,
\[
        \|M_{\log,\gamma,\beta}w\|_{L^r(\mathbb{R}^d)}
        \lesssim
        \|w\|_{L^r(\mathbb{R}^d)} .
\]
For $r=\infty$, the same conclusion holds whenever $\beta\geq0$.
The implicit constant depends on $d,\gamma,\beta,r$, on $t_0$, and on
the fixed averaging kernel $\eta$.
\end{lemma}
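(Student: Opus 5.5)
The plan is to dominate $M_{\log,\gamma,\beta}w$ pointwise by a power-type Hardy--Littlewood maximal function $\bigl(M(w^s)\bigr)^{1/s}$ with $1\le s<r$. The logarithmic loss coming from the aperture $a_\gamma(t)/t=(\log(1/t))^{\gamma-1}$ is then absorbed exactly by the factor $(\log(1/t))^{-2\beta}$. This avoids any summation over scales.

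\emph{Step 1 (Hölder in the average).} Since $\eta\ge0$ and $\int\eta=1$, Jensen's (or Hölder's) inequality gives
\[
        A_tw(y)\le\bigl(\eta_t*w^s(y)\bigr)^{1/s}
\]
for any $s\ge1$.

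\emph{Step 2 (moving the center).} Fix $0<t<t_0$, write $a=a_\gamma(t)\ge t$, and let $|y-x|\le a$. I will show that
\[
        \eta_t(y-z)\lesssim_N \Bigl(\frac{a}{t}\Bigr)^{d}\,a^{-d}\Bigl(1+\frac{|x-z|}{a}\Bigr)^{-N}.
\]
\begin{itemize}
\item If $|x-z|\le2a$, the bound follows from $\eta_t\le Ct^{-d}=C(a/t)^da^{-d}$.
\item If $|x-z|>2a$, then $|y-z|\ge|x-z|/2$. The Schwartz decay of $\eta$ gives $\eta_t(y-z)\lesssim t^{-d}(1+|x-z|/t)^{-N}$, and this is at most $t^{-d}(1+|x-z|/a)^{-N}$ because $t\le a$.
\end{itemize}
Taking $N>d$, the right-hand kernel is a radially decreasing integrable kernel at scale $a$. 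It is therefore dominated by the Hardy--Littlewood maximal function, exactly as in the proof of Proposition \ref{prop:forward-log-g-star}. This yields
\[
        \eta_t*w^s(y)\lesssim(\log(1/t))^{d(\gamma-1)}M(w^s)(x).
\]

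\emph{Step 3 (choice of $s$ and conclusion).} Combining Steps 1 and 2,
\[
        (\log(1/t))^{-2\beta}A_tw(y)\lesssim(\log(1/t))^{-2\beta+d(\gamma-1)/s}\bigl(M(w^s)(x)\bigr)^{1/s}.
\]
Choose $s$ as follows:
\begin{itemize}
\item If $2\beta\ge d(\gamma-1)$, take $s=1$.
\item Otherwise, the hypothesis $2\beta>d(\gamma-1)/r$ lets me take $s\in(1,r)$ with $d(\gamma-1)/s\le2\beta$; any $s<r$ sufficiently close to $r$ works.
\end{itemize}
Since $t<t_0<e^{-1}$, we have $\log(1/t)\ge1$, so the logarithmic factor is at most $1$. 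Taking the supremum over admissible $(t,y)$ gives
\[
        M_{\log,\gamma,\beta}w\lesssim\bigl(M(w^s)\bigr)^{1/s}.
\]
Because $r/s>1$, the Hardy--Littlewood maximal theorem gives
\[
        \|(M(w^s))^{1/s}\|_{L^r}=\|M(w^s)\|_{L^{r/s}}^{1/s}\lesssim\|w\|_{L^r}.
\]

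\emph{Case $r=\infty$.} Here the argument is immediate: $A_tw\le\|w\|_{L^\infty}$ and $(\log(1/t))^{-2\beta}\le1$ when $\beta\ge0$.

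The only delicate point is Step 2, namely that translating the center by the large distance $a_\gamma(t)$ costs exactly the volume ratio $(a/t)^d$ and nothing more. The case split above handles it, but care is needed to use the decay of $\eta$ uniformly in $t$.
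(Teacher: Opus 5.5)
Your proof is correct and follows the paper's route: you bound $M_{\log,\gamma,\beta}w$ pointwise by $(M(w^s))^{1/s}$, paying the recentering loss $(a_\gamma(t)/t)^{d/s}=(\log(1/t))^{d(\gamma-1)/s}$, choose $s<r$ so that $(\log(1/t))^{-2\beta}$ absorbs this loss, and conclude from the $L^{r/s}$ boundedness of $M$. The differences are cosmetic: you apply Jensen to the whole average and compare kernels pointwise (and allow $s=1$ when $2\beta\ge d(\gamma-1)$), whereas the paper splits $\eta_t$ into dyadic annuli and applies H\"older on each ball before recentering.
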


\begin{proof}
We first treat the case $1<r<\infty$. Choose $s$ such that
\[
        1<s<r
        \qquad\text{and}\qquad
        2\beta>\frac{d(\gamma-1)}{s}.
\]
This is possible because $2\beta>d(\gamma-1)/r$.

Let
\[
        M_s w(x)=\bigl(M(|w|^s)(x)\bigr)^{1/s}.
\]
We claim that
\[
        M_{\log,\gamma,\beta}w(x)
        \lesssim
        M_s w(x).
\]

Fix $0<t<t_0$ and $y\in\mathbb{R}^d$ with
\[
        |y-x|\leq a_\gamma(t).
\]
Since $\eta\in\mathcal S(\mathbb{R}^d)$, for every $N>d$ we have
\[
        |\eta_t(z)|
        \lesssim_N
        t^{-d}
        \left(1+\frac{|z|}{t}\right)^{-N}.
\]
Thus
\[
        A_t w(y)
        =
        \int_{\mathbb{R}^d}\eta_t(y-z)w(z)\,\mathrm{d}z
        \lesssim_N
        \sum_{j\geq0}
        2^{-jN}
        t^{-d}
        \int_{|z-y|\leq 2^{j+1}t}|w(z)|\,\mathrm{d}z .
\]
By Hölder's inequality,
\[
        t^{-d}
        \int_{|z-y|\leq 2^{j+1}t}|w(z)|\,\mathrm{d}z
        \lesssim
        2^{jd}
        \left(
        \frac{1}{(2^jt)^d}
        \int_{|z-y|\leq 2^{j+1}t}|w(z)|^s\,\mathrm{d}z
        \right)^{1/s}.
\]
Since $|y-x|\leq a_\gamma(t)$, the ball $B(y,2^{j+1}t)$ is contained in
a ball centered at $x$ of radius comparable to
\[
        a_\gamma(t)+2^jt.
\]
Therefore
\[
        \left(
        \frac{1}{(2^jt)^d}
        \int_{|z-y|\leq 2^{j+1}t}|w(z)|^s\,\mathrm{d}z
        \right)^{1/s}
        \lesssim
        \left(
        \frac{a_\gamma(t)+2^jt}{2^jt}
        \right)^{d/s}
        M_sw(x).
\]
Combining the last two estimates gives
\[
        A_t w(y)
        \lesssim
        M_sw(x)
        \sum_{j\geq0}
        2^{-jN}
        2^{jd}
        \left(
        1+\frac{a_\gamma(t)}{2^jt}
        \right)^{d/s}.
\]
Since $t_0<e^{-1}$, we have $a_\gamma(t)/t\geq1$. Choose $N>d$. Then
the sum is bounded by
\[
        C
        \left(\frac{a_\gamma(t)}{t}\right)^{d/s}.
\]
Consequently,
\[
        A_t w(y)
        \lesssim
        \left(\frac{a_\gamma(t)}{t}\right)^{d/s}
        M_sw(x).
\]
Since
\[
        \frac{a_\gamma(t)}{t}
        =
        (\log(1/t))^{\gamma-1},
\]
we obtain
\[
        A_t w(y)
        \lesssim
        (\log(1/t))^{d(\gamma-1)/s}M_sw(x).
\]
Multiplying by the logarithmic factor in the definition of
$M_{\log,\gamma,\beta}$, we get
\[
        (\log(1/t))^{-2\beta}A_t w(y)
        \lesssim
        (\log(1/t))^{-2\beta+d(\gamma-1)/s}M_sw(x).
\]
By the choice of $s$, the exponent
\[
        -2\beta+\frac{d(\gamma-1)}{s}
\]
is negative. Since $0<t<t_0$, the logarithmic factor is uniformly bounded.
Taking the supremum over $t$ and $y$ gives
\[
        M_{\log,\gamma,\beta}w(x)
        \lesssim
        M_sw(x).
\]
Finally, $M_s$ is bounded on $L^r$ because $s<r$. Hence
\[
        \|M_{\log,\gamma,\beta}w\|_{L^r}
        \lesssim
        \|M_sw\|_{L^r}
        \lesssim
        \|w\|_{L^r}.
\]

For $r=\infty$, the conclusion follows directly from
\[
        A_tw(y)\leq \|w\|_{L^\infty}
\]
and the boundedness of $(\log(1/t))^{-2\beta}$ on $0<t<t_0$ when
$\beta\geq0$.
\end{proof}

\begin{proposition}[Necessary condition for the logarithmic maximal bound]
\label{prop:log-maximal-necessity}
Let $\gamma>1$ and $1<r<\infty$. If there is a constant $C_r$ such that
\[
        \|M_{\log,\gamma,\beta}w\|_{L^r(\mathbb{R}^d)}
        \leq
        C_r\|w\|_{L^r(\mathbb{R}^d)}
\]
for every nonnegative $w\in L^r(\mathbb{R}^d)$, then
\[
        2\beta\geq\frac{d(\gamma-1)}{r}.
\]
For $r=\infty$, boundedness on nonnegative $L^\infty$ functions implies
$\beta\geq0$.
\end{proposition}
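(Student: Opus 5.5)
We test the assumed bound on the characteristic function of a small ball. Fix a small parameter $\delta\in(0,t_0)$ and set $w=\mathbf 1_{B(0,\delta)}$, so that $\|w\|_{L^r}^r\sim\delta^d$. We will show that $M_{\log,\gamma,\beta}w$ is large on a much bigger ball, namely the ball of radius comparable to $a_\gamma(\delta)$. Comparing $L^r$ norms then gives the stated constraint on $\beta$ as $\delta\to0$.

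\emph{Lower bound.} Fix $x$ with $|x|\leq a_\gamma(\delta)$. In the supremum defining $M_{\log,\gamma,\beta}$, choose the admissible pair $t=\delta$ and $y=0$. This is allowed since $|y-x|=|x|\leq a_\gamma(\delta)$. By Remark \ref{rem:smooth-hard-averages}, that is by $\eta\geq c\,\mathbf 1_{B(0,1)}$, we get
\[
A_\delta w(0)\geq c\,\delta^{-d}\bigl|B(0,\delta)\cap B(0,\delta)\bigr|\gtrsim 1.
\]
Hence
\[
M_{\log,\gamma,\beta}w(x)\gtrsim(\log(1/\delta))^{-2\beta}\qquad\text{for all }|x|\leq a_\gamma(\delta).
\]
Integrating over this ball gives
\[
\|M_{\log,\gamma,\beta}w\|_{L^r}^r\gtrsim(\log(1/\delta))^{-2\beta r}\,a_\gamma(\delta)^d
=(\log(1/\delta))^{-2\beta r}\,\delta^d(\log(1/\delta))^{d(\gamma-1)}.
\]

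\emph{Conclusion for $1<r<\infty$.} The assumed bound gives $\|M_{\log,\gamma,\beta}w\|_{L^r}^r\lesssim\delta^d$. Comparing with the lower bound, the factor $\delta^d$ cancels and we are left with
\[
(\log(1/\delta))^{d(\gamma-1)-2\beta r}\lesssim1\qquad\text{uniformly as }\delta\to0.
\]
This forces $d(\gamma-1)-2\beta r\leq0$, which is exactly $2\beta\geq d(\gamma-1)/r$.

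\emph{Case $r=\infty$.} Take $w\equiv1$, which is nonnegative and lies in $L^\infty$. Then $A_tw\equiv1$, so
\[
M_{\log,\gamma,\beta}w(x)=\sup_{0<t<t_0}(\log(1/t))^{-2\beta}.
\]
If $\beta<0$, this supremum is infinite, since $\log(1/t)\to\infty$ as $t\to0$. Boundedness on $L^\infty$ therefore forces $\beta\geq0$.

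\emph{Main obstacle.} The only delicate point is the lower bound on $A_\delta w(0)$. It must be uniform in $\delta$ and must hold at a single center $y=0$ that serves every $x$ in the large ball. The hypothesis $\eta\geq c\,\mathbf 1_{B(0,1)}$ in Section \ref{sec:weighted} gives this directly. The argument also needs $\delta<t_0<e^{-1}$, so that $a_\gamma(\delta)$ is defined and $\log(1/\delta)\geq1$.
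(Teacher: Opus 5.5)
Your proposal is correct and follows essentially the same argument as the paper. Both test on $\mathbf 1_{B(0,t)}$, use the single admissible center $y=0$ at scale $t$ (via $\eta\geq c\,\mathbf 1_{B(0,1)}$) to get the lower bound $(\log(1/t))^{-2\beta}$ on the ball of radius $a_\gamma(t)$, compare norms as $t\to0$, and take $w\equiv1$ for the case $r=\infty$.
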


\begin{proof}
Fix $0<t<t_0$, and set
\[
        w_t=\mathbf{1}_{B(0,t)}.
\]
Since $\eta\geq c\,\mathbf{1}_{B(0,1)}$, a change of variables gives
\[
        A_tw_t(0)
        =
        \int_{|z|\leq t}\eta_t(-z)\,\mathrm{d}z
        =
        \int_{|u|\leq1}\eta(-u)\,\mathrm{d}u
        \geq c_0>0,
\]
where $c_0$ is independent of $t$. If $|x|\leq a_\gamma(t)$, the center
$y=0$ is admissible in Definition \ref{def:log-maximal-candidate}; hence
\[
        M_{\log,\gamma,\beta}w_t(x)
        \geq
        c_0(\log(1/t))^{-2\beta}.
\]
Therefore
\[
\begin{aligned}
        \frac{\|M_{\log,\gamma,\beta}w_t\|_{L^r}}
        {\|w_t\|_{L^r}}
        &\gtrsim
        (\log(1/t))^{-2\beta}
        \left(\frac{a_\gamma(t)}{t}\right)^{d/r} \\
        &=
        (\log(1/t))^{-2\beta+d(\gamma-1)/r}.
\end{aligned}
\]
Uniform boundedness as $t\downarrow0$ forces
$-2\beta+d(\gamma-1)/r\leq0$, which is the required condition.

For $r=\infty$, apply the operator to $w\equiv1$. Since
$A_t1=1$, boundedness requires
\[
        \sup_{0<t<t_0}(\log(1/t))^{-2\beta}<\infty,
\]
which is equivalent to $\beta\geq0$.
\end{proof}

\begin{theorem}[$L^p$ boundedness above $L^2$]
\label{thm:lp-log-multiplier-pgeq2}
Let $\gamma>1$, let $2\leq p<\infty$, and let $\beta\geq0$. If
$p>2$, assume in addition that
\[
        \beta>
        d(\gamma-1)
        \left(\frac12-\frac1p\right).
\]
Let
\[
        m=m_{\mathrm{lo}}+m_{\mathrm{hi}},
\]
where $m_{\mathrm{hi}}$ satisfies the localized logarithmic Miyachi
condition of Hypothesis \ref{hyp:localized-log-miyachi}, is supported in
$\{|\xi|\geq2t_0^{-1}\}$, and where $m_{\mathrm{lo}}$ is a smooth
compactly supported multiplier. Then
\[
        \|T_m f\|_{L^p(\mathbb{R}^d)}
        \lesssim
        \|f\|_{L^p(\mathbb{R}^d)} .
\]
The implicit constant depends on $d$, $\gamma$, $\beta$, $p$, $\sigma$,
$t_0$, the localized Miyachi constant $C_m$, and on the fixed functions and
cutoffs $\phi$, $\kappa$, and $\eta$.
\end{theorem}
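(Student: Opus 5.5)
We split $T_m=T_{m_{\mathrm{lo}}}+T_{m_{\mathrm{hi}}}$. The low-frequency piece is handled directly: $m_{\mathrm{lo}}$ is smooth and compactly supported, so $\check m_{\mathrm{lo}}$ is a Schwartz function and $T_{m_{\mathrm{lo}}}$ is bounded on every $L^p$ by Young's inequality. It therefore suffices to treat $T_{m_{\mathrm{hi}}}$.

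\emph{The case $p=2$.} Here we apply Theorem \ref{thm:weighted-log-multiplier} with $w\equiv1$. The mean value property gives $M^4 1=1$. Since $A_t1=1$, we also have $M_{\log,\gamma,\beta}1=\sup_{0<t<t_0}(\log(1/t))^{-2\beta}\le1$, because $\beta\ge0$ and $\log(1/t)>1$. Hence $M^2(M_{\log,\gamma,\beta}M^41)\le1$, and the weighted estimate reduces to $\|T_{m_{\mathrm{hi}}}f\|_{L^2}\lesssim\|f\|_{L^2}$.

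\emph{The case $p>2$.} We use the standard duality of the weighted estimate. Set $q=(p/2)'=p/(p-2)$. Then
\[
\|T_{m_{\mathrm{hi}}}f\|_{L^p}^2=\sup\Bigl\{\int|T_{m_{\mathrm{hi}}}f|^2w:\ w\ge0,\ \|w\|_{L^q}\le1\Bigr\}.
\]
For each such $w$, Theorem \ref{thm:weighted-log-multiplier} followed by H\"older's inequality with exponents $p/2$ and $q$ gives
\[
\int|T_{m_{\mathrm{hi}}}f|^2w\lesssim\|f\|_{L^p}^2\,\bigl\|M^2(M_{\log,\gamma,\beta}M^4w)\bigr\|_{L^q}.
\]
Since $1<q<\infty$, the operators $M^2$ and $M^4$ are bounded on $L^q$. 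It remains to have $M_{\log,\gamma,\beta}$ bounded on $L^q$. By Lemma \ref{lem:log-maximal-Lr} this holds when
\[
2\beta>\frac{d(\gamma-1)}{q}=d(\gamma-1)\Bigl(1-\frac2p\Bigr)=2d(\gamma-1)\Bigl(\frac12-\frac1p\Bigr).
\]
This is exactly the assumption $\beta>d(\gamma-1)(1/2-1/p)$. Combining these bounds gives $\|T_{m_{\mathrm{hi}}}f\|_{L^p}\lesssim\|f\|_{L^p}$.

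\emph{Technical points.} The argument is mostly assembly, and two details need checking. First, $M_{\log,\gamma,\beta}$ must be applied to $M^4w$, which lies in $L^q$ whenever $w$ does, so Lemma \ref{lem:log-maximal-Lr} applies to it. Second, the duality step should be carried out for Schwartz $f$, with density extending the bound. The constants then depend only on the data listed in the statement. The only genuinely nontrivial input is the exact match between $(p/2)'$ and the maximal threshold, which the computation above confirms.
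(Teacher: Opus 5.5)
Your proposal is correct and follows the paper's proof step for step. For $p=2$ you specialize Theorem \ref{thm:weighted-log-multiplier} to $w\equiv1$; for $p>2$ you use the same duality with $r=(p/2)'=p/(p-2)$, H\"older's inequality, $L^r$-boundedness of $M$ together with Lemma \ref{lem:log-maximal-Lr}, and the same check that $2\beta>d(\gamma-1)/r$ is exactly the hypothesis $\beta>d(\gamma-1)(\tfrac12-\tfrac1p)$. The only difference is that you handle $T_{m_{\mathrm{lo}}}$ by Young's inequality with the Schwartz kernel $\check m_{\mathrm{lo}}$ rather than by Mikhlin's theorem, which is equally valid and slightly more elementary.
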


\begin{proof}
The low-frequency term $T_{m_{\mathrm{lo}}}$ is bounded on $L^p$ by the
standard Mikhlin theorem, since $m_{\mathrm{lo}}$ is smooth and compactly
supported. It remains to treat $T_{m_{\mathrm{hi}}}$.

The case $p=2$ follows from Theorem
\ref{thm:weighted-log-multiplier} with $w\equiv1$.

Assume now that $p>2$, and set
\[
        r=\left(\frac p2\right)'=\frac{p}{p-2}.
\]
By duality,
\[
        \|T_{m_{\mathrm{hi}}} f\|_{L^p}^2
        =
        \bigl\||T_{m_{\mathrm{hi}}} f|^2\bigr\|_{L^{p/2}}
        =
        \sup_{\|w\|_{L^r}=1}
        \int_{\mathbb{R}^d}|T_{m_{\mathrm{hi}}} f(x)|^2w(x)\,\mathrm{d}x,
\]
where the supremum is taken over nonnegative $w\in L^r$.

Applying Theorem \ref{thm:weighted-log-multiplier}, we obtain
\[
        \int_{\mathbb{R}^d}|T_{m_{\mathrm{hi}}} f(x)|^2w(x)\,\mathrm{d}x
        \lesssim
        \int_{\mathbb{R}^d}|f(x)|^2
        M^2\bigl(M_{\log,\gamma,\beta}M^4w\bigr)(x)\,\mathrm{d}x .
\]
Since $M$ is bounded on $L^r$ and, by Lemma
\ref{lem:log-maximal-Lr}, $M_{\log,\gamma,\beta}$ is bounded on $L^r$
whenever
\[
        2\beta>\frac{d(\gamma-1)}{r},
\]
we have
\[
        \bigl\|M^2(M_{\log,\gamma,\beta}M^4w)\bigr\|_{L^r}
        \lesssim
        \|w\|_{L^r}.
\]
Hölder's inequality therefore gives
\[
        \int_{\mathbb{R}^d}|T_{m_{\mathrm{hi}}} f(x)|^2w(x)\,\mathrm{d}x
        \lesssim
        \|\,|f|^2\,\|_{L^{p/2}}\|w\|_{L^r}
        =
        \|f\|_{L^p}^2\|w\|_{L^r}.
\]
Taking the supremum over $\|w\|_{L^r}=1$ yields
\[
        \|T_{m_{\mathrm{hi}}} f\|_{L^p}
        \lesssim
        \|f\|_{L^p}.
\]
Combining this estimate with the $L^p$-boundedness of
$T_{m_{\mathrm{lo}}}$ gives the asserted bound for $T_m$.

Finally,
\[
        2\beta>\frac{d(\gamma-1)}{r}
\]
is equivalent to
\[
        \beta>
        \frac{d(\gamma-1)}{2r}
        =
        d(\gamma-1)
        \left(\frac12-\frac1p\right),
\]
which is the stated condition for $p>2$.
\end{proof}

\begin{corollary}[$L^p$ boundedness by duality]
\label{cor:lp-log-multiplier-full-range}
Let $\gamma>1$, $1<p<\infty$, and let $\beta\geq0$. Let
\[
        m=m_{\mathrm{lo}}+m_{\mathrm{hi}},
\]
where $m_{\mathrm{lo}}$ is a smooth compactly supported multiplier and
$m_{\mathrm{hi}}$ is supported in $\{|\xi|\geq2t_0^{-1}\}$ and satisfies
Hypothesis \ref{hyp:localized-log-miyachi}. If $p\neq2$, assume in addition
that
\[
        \beta>
        d(\gamma-1)
        \left|\frac12-\frac1p\right|.
\]
Then
\[
        \|T_m f\|_{L^p(\mathbb{R}^d)}
        \lesssim
        \|f\|_{L^p(\mathbb{R}^d)} .
\]
\end{corollary}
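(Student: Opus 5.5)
The plan is to reduce the corollary to Theorem \ref{thm:lp-log-multiplier-pgeq2} by duality, splitting into the cases $2\le p<\infty$ and $1<p<2$. For $2\leq p<\infty$ there is nothing to do. The hypotheses are exactly those of Theorem \ref{thm:lp-log-multiplier-pgeq2}, since $|1/2-1/p|=1/2-1/p$ in that range, and the theorem gives the bound directly. This includes $p=2$ with every $\beta\geq0$.

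For $1<p<2$, let $p'=p/(p-1)\in(2,\infty)$. Since $|1/2-1/p'|=|1/2-1/p|$, the assumption reads
\[
        \beta>d(\gamma-1)\Bigl(\frac12-\frac1{p'}\Bigr).
\]
The adjoint of $T_m$ with respect to the pairing $\int f\overline g$ is $T_{\overline m}$, checked by Plancherel on Schwartz functions. We then decompose
\[
        \overline m=\overline{m_{\mathrm{lo}}}+\overline{m_{\mathrm{hi}}}.
\]
The function $\overline{m_{\mathrm{lo}}}$ is still smooth and compactly supported. The function $\overline{m_{\mathrm{hi}}}$ is still supported in $\{|\xi|\geq 2t_0^{-1}\}$. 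By Lemma \ref{lem:log-miyachi-conjugation} it satisfies Hypothesis \ref{hyp:localized-log-miyachi} with the same constant $C_m$ and the same $\sigma$. Hence Theorem \ref{thm:lp-log-multiplier-pgeq2} applies to $\overline m$ at exponent $p'$ and gives
\[
        \|T_{\overline m}g\|_{L^{p'}}\lesssim\|g\|_{L^{p'}}.
\]

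Finally, for Schwartz $f,g$,
\[
        \Bigl|\int T_mf\,\overline g\Bigr|
        =\Bigl|\int f\,\overline{T_{\overline m}g}\Bigr|
        \le\|f\|_{L^p}\|T_{\overline m}g\|_{L^{p'}}
        \lesssim\|f\|_{L^p}\|g\|_{L^{p'}}.
\]
Taking the supremum over $g$ in the unit ball of $L^{p'}$, by density of Schwartz functions, gives $\|T_mf\|_{L^p}\lesssim\|f\|_{L^p}$ on Schwartz $f$. The operator then extends boundedly to $L^p$.

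The only step needing care is the adjoint identity. The symbol $m$ is bounded at high frequency: this follows from the $\theta=0$ case of the hypothesis together with the $\dot H^\sigma$ control for $\sigma>d/2$, which gives $L^\infty$ on each ball, uniformly. So $T_m$ and $T_{\overline m}$ are well defined on $\mathcal S$. The identity $\int m\widehat f\,\overline{\widehat g}=\int\widehat f\,\overline{\overline m\,\widehat g}$ is then immediate. Nothing else in the argument is more than bookkeeping.
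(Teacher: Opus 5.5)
Your proposal is correct and follows the paper's proof. For $p\ge2$ you invoke Theorem \ref{thm:lp-log-multiplier-pgeq2} directly, and for $1<p<2$ you apply that theorem to $T_{\overline m}=T_m^*$ on $L^{p'}$, using Lemma \ref{lem:log-miyachi-conjugation}, and then dualize, exactly as the paper does. Your check of the adjoint identity, deriving boundedness of $m_{\mathrm{hi}}$ from the $\theta=0$ and $\theta=\sigma$ cases of the hypothesis, is a detail the paper leaves implicit.
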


\begin{proof}
For $2\leq p<\infty$, this is Theorem
\ref{thm:lp-log-multiplier-pgeq2}.

Assume $1<p<2$. Then $p'>2$. The adjoint multiplier decomposes as
\[
        \overline m
        =
        \overline{m_{\mathrm{lo}}}
        +
        \overline{m_{\mathrm{hi}}},
\]
where $\overline{m_{\mathrm{lo}}}$ is again smooth and compactly
supported, while $\overline{m_{\mathrm{hi}}}$ satisfies the localized
logarithmic Miyachi condition by Lemma
\ref{lem:log-miyachi-conjugation}. Applying Theorem
\ref{thm:lp-log-multiplier-pgeq2} to $T_{\overline m}=T_m^*$ on
$L^{p'}$, and then dualizing, gives the $L^p$ bound for $T_m$.
Since
\[
        \frac12-\frac1{p'}
        =
        \frac1p-\frac12,
\]
the stated condition is exactly the condition needed for the adjoint bound.
\end{proof}

\begin{remark}[Sharpness of the maximal threshold]
\label{rem:lp-sharpness}
Lemma \ref{lem:log-maximal-Lr} proves strong $L^r$ boundedness under
\[
        2\beta>\frac{d(\gamma-1)}{r},
\]
whereas Proposition \ref{prop:log-maximal-necessity} shows that such
boundedness requires
\[
        2\beta\geq\frac{d(\gamma-1)}{r}.
\]
Thus the logarithmic maximal threshold is sharp away from the endpoint
case, which is not treated here:
\[
        2\beta=\frac{d(\gamma-1)}{r}.
\]
This necessity statement concerns the maximal operator used in the weighted
argument. It does not, by itself, prove necessity of the decay threshold in
Corollary \ref{cor:lp-log-multiplier-full-range} for the full multiplier
class.
\end{remark}

\begin{remark}[No power-type Sobolev gain]
\label{rem:no-lp-lq-gain}
Unlike the power subdyadic setting, the logarithmic geometry does not give
a genuine power improvement from $L^p$ to $L^q$ with $q>p$. The
admissible displacement satisfies
\[
        \frac{a_\gamma(t)}{t}
        =
        (\log(1/t))^{\gamma-1},
\]
so the loss in the associated maximal operator is logarithmic rather than
polynomial in $t^{-1}$. Accordingly, the natural consequence of the
weighted estimate is an $L^p\to L^p$ theorem with a logarithmic smoothness
threshold.
\end{remark}

%% file: BB_log.bib
@article{Hirschman1959MultiplierTransformations,
  author  = {Hirschman, Jr., I. I.},
  title   = {{On multiplier transformations}},
  journal = {Duke Mathematical Journal},
  volume  = {26},
  number  = {2},
  pages   = {221--242},
  year    = {1959},
  doi     = {10.1215/S0012-7094-59-02623-7}
}

@book{Wainger1965SpecialTrigonometricSeries,
  author    = {Wainger, S.},
  title     = {{Special Trigonometric Series in $k$-Dimensions}},
  series    = {Memoirs of the American Mathematical Society},
  number    = {59},
  publisher = {American Mathematical Society},
  address   = {Providence, RI},
  year      = {1965}
}

@article{Hormander1960TranslationInvariant,
  author  = {H{\"o}rmander, L.},
  title   = {{Estimates for translation invariant operators in $L^p$ spaces}},
  journal = {Acta Mathematica},
  volume  = {104},
  pages   = {93--140},
  year    = {1960},
  doi     = {10.1007/BF02547187}
}

@article{Fefferman1970StronglySingular,
  author  = {Fefferman, C.},
  title   = {{Inequalities for strongly singular convolution operators}},
  journal = {Acta Mathematica},
  volume  = {124},
  pages   = {9--36},
  year    = {1970},
  doi     = {10.1007/BF02394567}
}

@article{FeffermanStein1972HpSpaces,
  author  = {Fefferman, C. and Stein, E. M.},
  title   = {{$H^p$ spaces of several variables}},
  journal = {Acta Mathematica},
  volume  = {129},
  pages   = {137--193},
  year    = {1972},
  doi     = {10.1007/BF02392215}
}

@article{Sjolin1981OscillatingKernels,
  author  = {Sj{\"o}lin, P.},
  title   = {{Convolution with oscillating kernels}},
  journal = {Indiana University Mathematics Journal},
  volume  = {30},
  number  = {1},
  pages   = {47--55},
  year    = {1981},
  doi     = {10.1512/iumj.1981.30.30004}
}

@article{NagelStein1984MaximalApproachRegions,
  author  = {Nagel, A. and Stein, E. M.},
  title   = {{On certain maximal functions and approach regions}},
  journal = {Advances in Mathematics},
  volume  = {54},
  number  = {1},
  pages   = {83--106},
  year    = {1984},
  doi     = {10.1016/0001-8708(84)90038-0}
}

@article{Bennett2014OptimalControl,
  author  = {Bennett, J.},
  title   = {{Optimal control of singular Fourier multipliers by maximal operators}},
  journal = {Analysis \& PDE},
  volume  = {7},
  number  = {6},
  pages   = {1317--1338},
  year    = {2014},
  doi     = {10.2140/apde.2014.7.1317}
}

@article{RubioDeFrancia1985LittlewoodPaley,
  author  = {Rubio de Francia, J. L.},
  title   = {{A Littlewood--Paley inequality for arbitrary intervals}},
  journal = {Revista Matem{\'a}tica Iberoamericana},
  volume  = {1},
  number  = {2},
  pages   = {1--14},
  year    = {1985}
}

@article{Wilson1989WeightedSquareFunction,
  author  = {Wilson, J. M.},
  title   = {{Weighted norm inequalities for the continuous square function}},
  journal = {Transactions of the American Mathematical Society},
  volume  = {314},
  number  = {2},
  pages   = {661--692},
  year    = {1989}
}

@incollection{LeeRogersSeeger2014SquareFunctions,
  author    = {Lee, S. and Rogers, K. M. and Seeger, A.},
  title     = {{Square functions and maximal operators associated with radial Fourier multipliers}},
  booktitle = {Advances in Analysis: The Legacy of Elias M. Stein},
  series    = {Princeton Mathematical Series},
  volume    = {50},
  pages     = {273--302},
  publisher = {Princeton University Press},
  address   = {Princeton, NJ},
  year      = {2014}
}

@article{BeltranBennett2017Subdyadic,
  author  = {Beltran, D. and Bennett, J.},
  title   = {{Subdyadic square functions and applications to weighted harmonic analysis}},
  journal = {Advances in Mathematics},
  volume  = {307},
  pages   = {72--99},
  year    = {2017},
  doi     = {10.1016/j.aim.2016.11.018}
}

@book{Stein1970SingularIntegrals,
  author    = {Stein, E. M.},
  title     = {{Singular Integrals and Differentiability Properties of Functions}},
  publisher = {Princeton University Press},
  address   = {Princeton, NJ},
  year      = {1970}
}

@book{Stein1970Topics,
  author    = {Stein, E. M.},
  title     = {{Topics in Harmonic Analysis Related to the Littlewood--Paley Theory}},
  series    = {Annals of Mathematics Studies},
  volume    = {63},
  publisher = {Princeton University Press},
  address   = {Princeton, NJ},
  year      = {1970}
}

@article{Miyachi1981Singular,
  author  = {Miyachi, A.},
  title   = {{On some singular Fourier multipliers}},
  journal = {Journal of the Faculty of Science, University of Tokyo. Section IA. Mathematics},
  volume  = {28},
  number  = {2},
  pages   = {267--315},
  year    = {1981}
}

@article{Miyachi1980FourierMultipliersHp,
  author  = {Miyachi, A.},
  title   = {{On some Fourier multipliers for $H^p(\mathbb{R}^n)$}},
  journal = {Journal of the Faculty of Science, University of Tokyo. Section IA. Mathematics},
  volume  = {27},
  number  = {1},
  pages   = {157--179},
  year    = {1980}
}
